\documentclass{article}

\usepackage{amsmath,amsthm,amssymb,mathtools}
\usepackage{fullpage}
\usepackage{hyperref}
\usepackage{natbib}

\newcommand{\ba}{\boldsymbol{a}}
\newcommand{\bb}{\boldsymbol{b}}

\newcommand{\bg}{\boldsymbol{g}}

\newcommand{\bp}{\boldsymbol{p}}
\newcommand{\bq}{\boldsymbol{q}}

\newcommand{\bx}{\boldsymbol{x}}
\newcommand{\bu}{\boldsymbol{u}}
\newcommand{\by}{\boldsymbol{y}}

\newcommand{\bz}{\boldsymbol{z}}

\newcommand{\bw}{\boldsymbol{w}}

\newcommand{\bv}{\boldsymbol{v}}
\newcommand{\bzero}{\boldsymbol{0}}

\newcommand{\argmin}{\mathop{\mathrm{argmin}}}

\newcommand{\field}[1]{\mathbb{#1}}

\newcommand{\R}{\field{R}}

\DeclareMathOperator{\ri}{ri}
\DeclareMathOperator{\supp}{supp}

\newtheorem{theorem}{Theorem}
\newtheorem{corollary}{Corollary}
\newtheorem{lemma}{Lemma}

\title{Last-Iterate Convergence of Optimistic Multiplicative Weight Update}
\author{Francesco Orabona\\
King Abdullah University of Science and Technology (KAUST)\\
Thuwal, 23955-6900, Kingdom of Saudi Arabia\\
\url{francesco@orabona.com}
}

\begin{document}

\maketitle

\begin{abstract}
Optimistic Gradient Descent Ascent (OGDA) and Optimistic Multiplicative-Weights Update (OMWU) are two very popular algorithms to solve convex/concave saddle-point problems, where OMWU is the non-Euclidean, entropic version of OGDA. It is known since the '80s that the last iterate of OGDA asymptotically converges to a saddle point in smooth problems. On the other hand, it is unknown if OMWU has the same property.
In this paper, I show that OMWU converges asymptotically for smooth convex-concave saddle-point problems, with a small enough constant learning rate.
The result does not require uniqueness, strict complementarity, an error bound, or initialization near a solution. The main new ingredient is a boundary argument showing that every cluster point satisfies the inactive-coordinate KKT inequalities.
The boundary argument was discovered with assistance from ChatGPT and is documented in the appendix.
\end{abstract}

\section{Introduction}

We are interested in showing the last-iterate convergence of Optimistic Multiplicative-Weights Update (OMWU) on the problem
\[
\min_{\bx \in \Delta_n} \max_{\by \in \Delta_m} \ f(\bx,\by),
\]
where $f:\Delta_n \times \Delta_m \to \R$ is convex in the first argument, concave in the second, and with Lipschitz gradient.

For the Euclidean version of OMWU, that is Optimistic Gradient Descent Ascent (OGDA),
\citet{Popov80} proved that the iterates converge to the smooth saddle-point problems when used with a constant small enough learning rate.
The convergence of the last iterate is interesting from a game-theoretic point of view because it implies that two players in a zero-sum game can converge to their optimal strategies by using optimistic online algorithms.

However, despite many partial and related results, it is unknown if the same guarantee holds for OMWU under the same generality. This gap in the knowledge is due to the fact that OGDA and OMWU have very different behaviours. From an intuitive point of view, if OGDA slows down, we know that it is because it is approaching the saddle-point. Instead, if OMWU slows down, it could just be due to the fact that the iterates are approaching the boundary of the simplex. This difference can be mathematically characterized by the fact that the distance generating function of the OMWU, the negative entropy, is not differentiable on the boundary of the simplex.

In this paper, I show a proof of convergence of the iterates of OMWU to a saddle-point of smooth problems, if used with a constant small enough learning rate. The proof starts from the classic one of \citet{Popov80}, generalized to the non-Euclidean case, and complemented with a new key lemma to show the optimality of cluster points of the trajectory. Remarkably, the key lemma in my analysis was proved with the assistance of ChatGPT 5.5, and I detail the process in Appendix~\ref{sec:llm}.

\section{Related Work}

\citet{Popov80} proposed what we call Optimistic Gradient Descent Ascent (OGDA) with two projections for saddle-point optimization, and he showed the last-iterate convergence with any fixed learning rate $\eta>0$ satisfying $\eta L < 1/3$, where $L$ is the Lipschitz constant of the gradient of the function $f$.
The version of OGDA with one projection was proposed by \citet{Malitsky15}.
The result of Popov was not widely known, so later people proved weaker results. For example, \citet{DaskalakisLSZ18} proved that the last iterate of OGDA converges to a neighborhood of the saddle-point of bilinear games. The extension to the mirror descent case of Popov's proof was done by \citet{Semenov17} to solve the more general problem of variational inequalities, but his proof has a mistake, and it is valid only for distance generating functions that are differentiable on the entire feasible set\footnote{The error was pointed out in \citet{Orabona19}.}. A similar caveat seems to be present in equation (D.7) in \citet{MertikopoulosLZFCP18}, where the differentiability of the distance generating function is needed for the implication, so it does not work for the negative entropy.\footnote{An attempt to fix their proof would just result in the optimality of the non-zero coordinates of the cluster point.}
Semenov's result (restricted to differentiable distance generating functions) was rediscovered in \citep[Theorem 4]{LeeKL21}. \citet{LeiNPW21} proves the \emph{local}, i.e., starting close enough to the saddle-point, last-iterate convergence of OMWU for saddle-point problems, under the assumption that some of the optimality conditions are satisfied in a strict way. They specifically pose the question of global last-iterate convergence as an open problem.

In the case of zero-sum two-person games, that is, bilinear games over the product of the simplexes, and assuming a unique solution, the last iterate convergence of OMWU was established by \citet{DaskalakisP18} (asymptotically) and \citet{WeiLZL21} (with a finite-time rate). However, these results hinge on the fact that zero-sum two-person games provide an error bound which, in general, is not present for generic saddle-point problems.

Recently, \citet{CaiFGCKLLZ24} showed that, even in simple matrix games, the last-iterate convergence rate of OMWU can be arbitrarily slow. This does not contradict our result, but it complements it by showing that behaviour in the finite-time regime.

To the best of my knowledge, this is the first global asymptotic last-iterate convergence proof for OMWU on smooth convex-concave saddle-point problems over products of simplexes, without uniqueness, strict complementarity, bilinearity, error-bound assumptions, or local initialization.

\section{Setting}

Let $\mathcal{X}\subseteq \R^n$ and ${\mathcal{Y}}\subseteq \R^m$ be nonempty closed convex sets and set $\mathcal{Z}:=\mathcal{X}\times {\mathcal{Y}}$. Denote by $\bz=(\bx,\by)$ any point in $\mathcal{Z}$.
Let $f:\mathcal{X}\times {\mathcal{Y}}\to \R$ be differentiable, convex in $\bx$, and concave in $\by$.
A point $\bz^\star=(\bx^\star,\by^\star)\in \mathcal{Z}$ is a saddle point if
\[
f(\bx^\star,\by)\le f(\bx^\star,\by^\star)\le f(\bx,\by^\star),
\qquad \forall (\bx,\by)\in \mathcal{X}\times {\mathcal{Y}}~.
\]
Denote the set of saddle points by $\mathcal{Z}^\star$ and assume $\mathcal{Z}^\star\neq\emptyset$.
Define
\[
G(\bz)
:=\left(\nabla_{\bx}f(\bx,\by),-\nabla_{\by}f(\bx,\by)\right)~.
\]
Let $\|\cdot\|$ be a norm on $\mathcal{Z}$ and $\|\cdot\|_\star$ its dual norm.

Throughout this paper we will assume that $G$ is $L$-Lipschitz:
\begin{equation}
\label{eq:smooth}
\|G(\bz)-G(\bw)\|_\star \le L\|\bz-\bw\|,
\qquad \forall \bz,\bw\in \mathcal{Z}~.
\end{equation}

\begin{lemma}[Saddle-point gap inequality]
\label{lemma:gap}
For every $\bz=(\bx,\by)\in \mathcal{Z}$ and every $\bz^\star=(\bx^\star,\by^\star)\in \mathcal{Z}^\star$,
\[
\langle G(\bz),\bz-\bz^\star\rangle
\ge f(\bx,\by^\star)-f(\bx^\star,\by)\ge 0~.
\]
In particular, if $\bg_t=G(\bz'_t)$, then $\langle \bg_t,\bz'_t-\bz^\star\rangle\ge 0$.
\end{lemma}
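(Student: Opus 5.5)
The inner product splits into an $\bx$-part and a $\by$-part, and I will bound each by a first-order convexity (respectively concavity) inequality. Writing $\bz=(\bx,\by)$ and $\bz^\star=(\bx^\star,\by^\star)$, the definition of $G$ gives
\[
\langle G(\bz),\bz-\bz^\star\rangle
=\langle \nabla_{\bx} f(\bx,\by),\bx-\bx^\star\rangle-\langle \nabla_{\by} f(\bx,\by),\by-\by^\star\rangle .
\]

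For the first term, the function $f(\cdot,\by)$ is convex and differentiable. The gradient inequality at $\bx$, evaluated at $\bx^\star$, reads $f(\bx^\star,\by)\ge f(\bx,\by)+\langle \nabla_{\bx}f(\bx,\by),\bx^\star-\bx\rangle$. Rearranging gives
\[
\langle \nabla_{\bx} f(\bx,\by),\bx-\bx^\star\rangle\ge f(\bx,\by)-f(\bx,\by)+f(\bx,\by)-f(\bx^\star,\by)=f(\bx,\by)-f(\bx^\star,\by).
\]

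For the second term, the function $f(\bx,\cdot)$ is concave. The reverse gradient inequality $f(\bx,\by^\star)\le f(\bx,\by)+\langle \nabla_{\by}f(\bx,\by),\by^\star-\by\rangle$ gives
\[
-\langle \nabla_{\by} f(\bx,\by),\by-\by^\star\rangle\ge f(\bx,\by^\star)-f(\bx,\by).
\]

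Adding the two bounds, the $f(\bx,\by)$ terms cancel, which yields the first inequality $f(\bx,\by^\star)-f(\bx^\star,\by)$.

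For nonnegativity I will use the saddle-point definition twice. Taking $\by$ in the left inequality gives $f(\bx^\star,\by)\le f(\bx^\star,\by^\star)$, and taking $\bx$ in the right inequality gives $f(\bx^\star,\by^\star)\le f(\bx,\by^\star)$. Chaining these gives $f(\bx,\by^\star)-f(\bx^\star,\by)\ge 0$.

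The "in particular" statement is the special case $\bz=\bz'_t$, which requires $\bz'_t\in\mathcal{Z}$; this holds since the iterates lie in the feasible set.

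The only point needing care is differentiability on the boundary of $\mathcal{Z}$, because the gradient inequality is used at points that may lie on the boundary. Differentiability of $f$ on $\mathcal{Z}$ is assumed, so I will interpret the gradient as the one-sided derivative along feasible directions. The gradient inequality then follows from convexity along the segment joining $\bx$ and $\bx^\star$, which stays in $\mathcal{X}$, so the boundary causes no problem.
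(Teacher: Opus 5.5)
Your proposal is correct and matches the paper's proof: first-order convexity in $\bx$ and concavity in $\by$, added so that the $f(\bx,\by)$ terms cancel, then the two saddle-point inequalities chained for nonnegativity. Your remark about justifying the gradient inequality on the boundary via one-sided derivatives along feasible segments is a harmless extra precision that the paper leaves implicit.
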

\begin{proof}
By convexity of $f(\cdot,\by)$, we have
\[
\langle \nabla_{\bx}f(\bx,\by),\bx-\bx^\star\rangle
\ge f(\bx,\by)-f(\bx^\star,\by)~.
\]
By concavity of $f(\bx,\cdot)$,
\[
\langle -\nabla_{\by}f(\bx,\by),\by-\by^\star\rangle
=\langle \nabla_{\by}f(\bx,\by),\by^\star-\by\rangle
\ge f(\bx,\by^\star)-f(\bx,\by)~.
\]
Adding the two inequalities gives the first inequality. Since $\bz^\star$ is a saddle point,
\[
f(\bx,\by^\star)\ge f(\bx^\star,\by^\star)
\ge f(\bx^\star,\by),
\]
which gives the second inequality.
\end{proof}

\paragraph{Mirror Descent}

Let $h:\mathcal{Z}\to \R$ be differentiable on a set containing the iterates of the algorithm and 1-strongly convex with respect to $\|\cdot\|$. This function will be called the \emph{distance generating function}. The Bregman divergence with respect to $h$ is
\[
B(\bu;\bz)
:=h(\bu)-h(\bz)-\langle \nabla h(\bz),\bu-\bz\rangle~.
\]
From the strong convexity of $h$, we have
\begin{equation}
\label{eq:sc}
B(\bu;\bz)
\ge \frac12\|\bu-\bz\|^2~.
\end{equation}

For $\eta>0$, we define Optimistic Mirror Descent/Ascent (OMDA) update as
\begin{align}
\bg_t &= G(\bz'_t), \label{eq:omda-g}\\
\bz_{t+1} &= \argmin_{\bz\in \mathcal{Z}}\ \eta\langle \bg_t,\bz\rangle+B(\bz;\bz_t), \label{eq:omda-z}\\
\bz'_{t+1} &= \argmin_{\bz\in \mathcal{Z}}\ \eta\langle \bg_t,\bz\rangle+B(\bz;\bz_{t+1})~. \label{eq:omda-zp}
\end{align}

The following lemma gives well-known results for the mirror update.

\begin{lemma}[Mirror descent inequalities]
\label{lemma:md}
Let
\[
\bz^+
=\argmin_{\bv\in \mathcal{Z}}\ \eta\langle \bg,\bv\rangle+B(\bv;\bz)~.
\]
Then, for all $\bu\in \mathcal{Z}$,
\begin{align}
\langle \nabla h(\bz)-\eta\bg-\nabla h(\bz^+),\bu-\bz^+\rangle &\le 0, \label{eq:md-opt}\\
B(\bu;\bz^+) &\le B(\bu;\bz)-B(\bz^+;\bz)-\eta\langle \bg,\bz^+-\bu\rangle~. \label{eq:md-three}
\end{align}
\end{lemma}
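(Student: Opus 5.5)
The plan is to derive \eqref{eq:md-opt} from the first-order optimality condition of the convex program that defines $\bz^+$, and then to obtain \eqref{eq:md-three} from \eqref{eq:md-opt} via the three-point identity for Bregman divergences.

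For \eqref{eq:md-opt}, consider the objective $\phi(\bv):=\eta\langle \bg,\bv\rangle+B(\bv;\bz)$ on $\mathcal{Z}$. It is convex, since it is a linear term plus $h$ plus an affine term. Its gradient wherever $h$ is differentiable is
\[
\nabla\phi(\bv)=\eta\bg+\nabla h(\bv)-\nabla h(\bz).
\]
The minimizer $\bz^+$ therefore satisfies the variational inequality $\langle \nabla\phi(\bz^+),\bu-\bz^+\rangle\ge 0$ for all $\bu\in\mathcal{Z}$, which is exactly \eqref{eq:md-opt} after a sign change. The delicate point is that we use differentiability of $h$ at $\bz^+$. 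This is justified by the standing assumption that $h$ is differentiable on a set containing the iterates, and $\bz^+$ is such an iterate. For the entropy, $\bz^+$ lies in the relative interior of the simplex product because the update is multiplicative from an interior point.

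To make the variational inequality step rigorous, fix $\bu\in\mathcal{Z}$ and set $\bv_\lambda=\bz^++\lambda(\bu-\bz^+)$, which lies in $\mathcal{Z}$ by convexity. Optimality gives $\phi(\bv_\lambda)\ge\phi(\bz^+)$. Dividing by $\lambda>0$ and letting $\lambda\downarrow0$ yields the directional derivative $\langle\nabla\phi(\bz^+),\bu-\bz^+\rangle\ge0$. This uses only differentiability at $\bz^+$, and not at $\bu$.

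For \eqref{eq:md-three}, we use the three-point identity, which follows by expanding the definitions:
\[
\langle \nabla h(\bz)-\nabla h(\bz^+),\bu-\bz^+\rangle = B(\bu;\bz^+)+B(\bz^+;\bz)-B(\bu;\bz).
\]
The $h(\bu)$, $h(\bz)$ and $h(\bz^+)$ terms cancel, and the inner products combine. Substituting into \eqref{eq:md-opt}, which states $\langle \nabla h(\bz)-\nabla h(\bz^+),\bu-\bz^+\rangle\le \eta\langle\bg,\bu-\bz^+\rangle$, gives
\[
B(\bu;\bz^+)\le B(\bu;\bz)-B(\bz^+;\bz)+\eta\langle \bg,\bu-\bz^+\rangle,
\]
which is \eqref{eq:md-three}.

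The only real obstacle is the differentiability and existence issue in the first step. Existence and uniqueness of $\bz^+$ follow from strong convexity and the closedness of $\mathcal{Z}$. The gradient $\nabla h(\bz^+)$ must also make sense, and this is covered by the paper's assumption. The identity in the second step needs only $\nabla h(\bz)$ and $\nabla h(\bz^+)$, and never $\nabla h(\bu)$. This is precisely why the lemma holds for arbitrary $\bu$, including boundary points of the simplex.
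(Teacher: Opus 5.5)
Your proposal is correct and follows the paper's proof: \eqref{eq:md-opt} is the first-order optimality condition for the convex program defining $\bz^+$, and \eqref{eq:md-three} follows by substituting the three-point identity into it. Your extra details are sound refinements of the same argument: the segment argument with $\bv_\lambda$ justifying the variational inequality, and the observation that only $\nabla h(\bz)$ and $\nabla h(\bz^+)$ are ever needed, so that $\bu$ may lie on the boundary.
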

\begin{proof}
The first inequality is the first-order optimality condition for the convex minimization problem defining $\bz^+$.
For the second inequality, use the three-point identity
\[
\langle \nabla h(\bz^+)-\nabla h(\bz),\bu-\bz^+\rangle
=B(\bu;\bz)-B(\bu;\bz^+)-B(\bz^+;\bz)
\]
and combine it with \eqref{eq:md-opt}.
\end{proof}

Note that in the OMWU literature the update is sometimes written without the help of the additional iterate $\bz'_t$. These two formulations are completely equivalent as we show in Appendix~\ref{sec:equivalent}.

\section{Popov's Proof}

In this section, I review Popov's proof for the asymptotic convergence of OGDA. Note that I could have started even from more modern analysis of OGDA, but this is also a good way to advertise more the fact that \citet{Popov80} proposed the optimistic update for saddle-point optimization way before anyone else.

Let $h(\bz)=\frac12\|\bz\|_2^2$ and $\mathcal{Z}$ be closed and convex. The OMDA updates become the OGDA ones:
\begin{align}
\bg_t &=G(\bz'_t), \label{eq:ogda-g}\\
\bz_{t+1}&=\Pi_{\mathcal{Z}}(\bz_t-\eta\bg_t), \label{eq:ogda-z}\\
\bz'_{t+1}&=\Pi_{\mathcal{Z}}(\bz_{t+1}-\eta\bg_t)~. \label{eq:ogda-zp}
\end{align}

\begin{theorem}[Popov's proof]
\label{thm:popov}
Assume~\eqref{eq:smooth} with the Euclidean norm and let $0<\eta<\frac{1}{3L}$.
Then, the sequence $(\bz_t)_{t\ge 0}$ generated by \eqref{eq:ogda-g}--\eqref{eq:ogda-zp} converges to a point in $\mathcal{Z}^\star$.
\end{theorem}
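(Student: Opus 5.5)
The plan is to derive a one-step Lyapunov inequality for $\|\bz_t-\bz^\star\|_2^2$ with a summable error term, then apply the standard Fejér/Opial argument. In the Euclidean case $B(\bu;\bz)=\frac12\|\bu-\bz\|_2^2$, so Lemma~\ref{lemma:md} can be applied twice. First, apply \eqref{eq:md-three} to the update $\bz_{t+1}$ from $\bz_t$ with gradient $\bg_t$ and $\bu=\bz^\star\in\mathcal{Z}^\star$:
\[
\tfrac12\|\bz^\star-\bz_{t+1}\|^2\le \tfrac12\|\bz^\star-\bz_t\|^2-\tfrac12\|\bz_{t+1}-\bz_t\|^2-\eta\langle \bg_t,\bz_{t+1}-\bz^\star\rangle .
\]
Split $\langle \bg_t,\bz_{t+1}-\bz^\star\rangle=\langle \bg_t,\bz'_t-\bz^\star\rangle+\langle \bg_t,\bz_{t+1}-\bz'_t\rangle$. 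The first term is nonnegative by Lemma~\ref{lemma:gap} and is dropped. Next, $\bz'_t$ is the prox step from $\bz_t$ with gradient $\bg_{t-1}$. Applying \eqref{eq:md-three} (equivalently \eqref{eq:md-opt}) with $\bu=\bz_{t+1}$ gives
\[
\eta\langle \bg_{t-1},\bz'_t-\bz_{t+1}\rangle\le \tfrac12\|\bz_{t+1}-\bz_t\|^2-\tfrac12\|\bz'_t-\bz_t\|^2-\tfrac12\|\bz_{t+1}-\bz'_t\|^2 .
\]
Write $\eta\langle \bg_t,\bz'_t-\bz_{t+1}\rangle=\eta\langle \bg_{t-1},\bz'_t-\bz_{t+1}\rangle+\eta\langle \bg_t-\bg_{t-1},\bz'_t-\bz_{t+1}\rangle$ and bound the last piece by $\eta L\|\bz'_t-\bz'_{t-1}\|\,\|\bz'_t-\bz_{t+1}\|$.

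Combining these gives
\[
\tfrac12\|\bz^\star-\bz_{t+1}\|^2\le\tfrac12\|\bz^\star-\bz_t\|^2-\tfrac12\|\bz'_t-\bz_t\|^2-\tfrac12\|\bz_{t+1}-\bz'_t\|^2+\eta L\|\bz'_t-\bz'_{t-1}\|\,\|\bz_{t+1}-\bz'_t\| .
\]
Control the cross term with $\|\bz'_t-\bz'_{t-1}\|\le\|\bz'_t-\bz_t\|+\|\bz_t-\bz'_{t-1}\|$ and Young's inequality. The term $\|\bz_t-\bz'_{t-1}\|^2$ is the previous step's $\|\bz_{t+1}-\bz'_t\|^2$ shifted by one index. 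So define the potential $\Phi_t=\frac12\|\bz^\star-\bz_t\|^2+c\|\bz_t-\bz'_{t-1}\|^2$ with $c$ of order $\eta L$. With $\eta L<1/3$ the bookkeeping should close as
\[
\Phi_{t+1}\le\Phi_t-\delta\bigl(\|\bz'_t-\bz_t\|^2+\|\bz_{t+1}-\bz'_t\|^2\bigr),\qquad \delta>0 .
\]
I expect this constant-juggling step to be the main obstacle: the Young weights must be chosen so that the threshold $1/3$ comes out exactly. I would first try splitting $\eta L ab\le \frac{\eta L}{2}(a^2+b^2)$ on each of the two triangle pieces.

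From the potential inequality, $\Phi_t$ is nonincreasing, so $(\bz_t)$ is bounded. Summing gives $\|\bz'_t-\bz_t\|\to0$ and $\|\bz_{t+1}-\bz'_t\|\to0$, hence also $\|\bz_{t+1}-\bz_t\|\to0$ and $\|\bg_t-\bg_{t-1}\|\to0$. Let $\bar\bz$ be a cluster point along a subsequence $t_k$. Pass to the limit in \eqref{eq:md-opt} for $\bz_{t_k+1}$, i.e.\ $\langle \bz_{t_k}-\eta\bg_{t_k}-\bz_{t_k+1},\bu-\bz_{t_k+1}\rangle\le0$, using $\bz'_{t_k}\to\bar\bz$ and continuity of $G$. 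This yields $\langle G(\bar\bz),\bu-\bar\bz\rangle\ge0$ for all $\bu\in\mathcal{Z}$, which for convex-concave $f$ is exactly the saddle-point condition, so $\bar\bz\in\mathcal{Z}^\star$. This is where the Euclidean case is easy: $\nabla h$ is continuous on all of $\mathcal{Z}$.

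To finish, take $\bz^\star=\bar\bz$ in the potential. Then $\Phi_t$ converges, and along the subsequence $\Phi_{t_k}\to0$ because $\|\bz_{t_k}-\bar\bz\|\to0$ and $\|\bz_{t_k}-\bz'_{t_k-1}\|\to0$. Hence $\Phi_t\to0$, so $\bz_t\to\bar\bz\in\mathcal{Z}^\star$.
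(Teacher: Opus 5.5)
Your proposal is correct and is essentially the paper's proof. It uses the same two projection (three-point) inequalities, the same triangle-plus-Young split of the cross term, the same limit passage at a cluster point, and the same re-centering at $\bar{\bz}$. With $c=\eta L/2$ your potential gives $\Phi_{t+1}\le\Phi_t-\frac12(1-\eta L)\|\bz'_t-\bz_t\|^2-\frac12(1-3\eta L)\|\bz_{t+1}-\bz'_t\|^2$, which is exactly half of the paper's one-step bound, so the threshold $1/3$ comes out as you expected. The only cosmetic difference is that you conclude by monotone convergence of $\Phi_t$, whereas the paper uses a tail-sum $\varepsilon$ argument; the two are equivalent.
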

\begin{proof}
Let $\bu\in \mathcal{Z}^\star$. The projection inequality gives, for all $t\ge 1$,
\begin{align*}
\|\bz_{t+1}-\bu\|^2
&\le \|\bz_t-\eta\bg_t-\bu\|^2-\|\bz_t-\eta\bg_t-\bz_{t+1}\|^2\\
&=\|\bz_t-\bu\|^2-\|\bz_{t+1}-\bz_t\|^2
-2\eta\langle \bg_t,\bz_{t+1}-\bu\rangle~.
\end{align*}
By Lemma~\ref{lemma:gap}, we have $\langle \bg_t,\bz'_t-\bu\rangle\ge 0$, so
\begin{align*}
\|\bz_{t+1}-\bu\|^2
&\le \|\bz_t-\bu\|^2-\|\bz_{t+1}-\bz_t\|^2
-2\eta\langle \bg_t,\bz_{t+1}-\bz'_t\rangle\\
&=\|\bz_t-\bu\|^2-\|\bz_{t+1}-\bz'_t\|^2-\|\bz'_t-\bz_t\|^2
-2\eta\langle \bg_t,\bz_{t+1}-\bz'_t\rangle
+2\langle \bz_t-\bz'_t,\bz_{t+1}-\bz'_t\rangle~.
\end{align*}
Since $\bz'_t=\Pi_{\mathcal{Z}}(\bz_t-\eta\bg_{t-1})$ and $\bz_{t+1}\in \mathcal{Z}$,
\[
\langle \bz_t-\eta\bg_{t-1}-\bz'_t,\bz_{t+1}-\bz'_t\rangle\le 0~.
\]
Therefore
\begin{align*}
\|\bz_{t+1}-\bu\|^2
&\le \|\bz_t-\bu\|^2-\|\bz_{t+1}-\bz'_t\|^2-\|\bz'_t-\bz_t\|^2 - 2\eta\langle \bg_t-\bg_{t-1},\bz_{t+1}-\bz'_t\rangle\\
&\le \|\bz_t-\bu\|^2-\|\bz_{t+1}-\bz'_t\|^2-\|\bz'_t-\bz_t\|^2 + 2\eta L\|\bz'_t-\bz'_{t-1}\|\,\|\bz_{t+1}-\bz'_t\|\\
&\le \|\bz_t-\bu\|^2-\|\bz_{t+1}-\bz'_t\|^2-\|\bz'_t-\bz_t\|^2 + 2\eta L(\|\bz'_t-\bz_t\|+\|\bz_t-\bz'_{t-1}\|)\|\bz_{t+1}-\bz'_t\|\\
&\le \|\bz_t-\bu\|^2+(2\eta L-1)\|\bz_{t+1}-\bz'_t\|^2+(\eta L-1)\|\bz'_t-\bz_t\|^2
+\eta L\|\bz_t-\bz'_{t-1}\|^2~.
\end{align*}
Summing from $t=N$ to $t=M$, where $1\le N\le M$, gives
\begin{align}
\|\bz_{M+1}-\bu\|^2
&+(1-\eta L)\sum_{t=N}^M\|\bz_t-\bz'_t\|^2
+(1-3\eta L)\sum_{t=N}^{M-1}\|\bz_{t+1}-\bz'_t\|^2
+(1-2\eta L)\|\bz_{M+1}-\bz'_M\|^2 \nonumber\\
&\le \|\bz_N-\bu\|^2+\eta L\|\bz_N-\bz'_{N-1}\|^2~. \label{eq:popov-tail}
\end{align}
Taking $N=1$ and using $\eta<1/(3L)$ shows that $(\bz_t)$ is bounded,
$\bz_t-\bz'_t\to \bzero$, and $\bz_{t+1}-\bz'_t\to \bzero$.
Let $\bar{\bz}$ be a cluster point of $(\bz_t)$ and choose $t_k$ such that $\bz_{t_k}\to \bar{\bz}$. Then, also $\bz'_{t_k}\to \bar{\bz}$ and $\bz_{t_k+1}\to \bar{\bz}$.
By continuity of $G$ and of the Euclidean projection, passing to the limit in
\[
\bz_{t_k+1}=\Pi_{\mathcal{Z}}(\bz_{t_k}-\eta G(\bz'_{t_k}))
\]
gives
\[
\bar{\bz}=\Pi_{\mathcal{Z}}(\bar{\bz}-\eta G(\bar{\bz}))~.
\]
This is equivalent to
\[
\langle G(\bar{\bz}),\bu-\bar{\bz}\rangle\ge 0,\qquad \forall \bu\in \mathcal{Z},
\]
and hence $\bar{\bz}\in \mathcal{Z}^\star$.

Now set $\bu=\bar{\bz}$ in \eqref{eq:popov-tail}. Since $\bz_{t_k}\to\bar{\bz}$ and $\bz_{t_k}-\bz'_{t_k-1}\to\bzero$, for every $\varepsilon>0$ we can choose $N=t_k$ large enough that
\[
\|\bz_N-\bar{\bz}\|^2+\eta L\|\bz_N-\bz'_{N-1}\|^2<\varepsilon^2~.
\]
Then, \eqref{eq:popov-tail} implies $\|\bz_{M+1}-\bar{\bz}\|<\varepsilon$ for all $M\ge N$. Hence, $\bz_t\to\bar{\bz}\in \mathcal{Z}^\star$.
\end{proof}

\section{Proof for Differentiable Distance Generating Functions}

We now show a simple generalization of Popov's proof for non-Euclidean differentiable distance generating functions. This is essentially the proof in \citet{Semenov17}.

\begin{lemma}[One-step Bregman inequality]
\label{lemma:bregman-step}
Let $(\bz_t,\bz'_t)$ be generated by \eqref{eq:omda-g}--\eqref{eq:omda-zp}. For every $\bz^\star\in \mathcal{Z}^\star$ and every $t\ge 1$,
\begin{equation}
B(\bz^\star;\bz_{t+1})
\le B(\bz^\star;\bz_t)
+(2\eta L-1)B(\bz_{t+1};\bz'_t)
+(\eta L-1)B(\bz'_t;\bz_t) +\eta L\, B(\bz_t;\bz'_{t-1})~. \label{eq:bregman-step}
\end{equation}
\end{lemma}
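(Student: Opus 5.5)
I will mirror Popov's argument step by step, replacing squared Euclidean distances with Bregman divergences and projection inequalities with Lemma~\ref{lemma:md}. First I apply the three-point inequality \eqref{eq:md-three} to the update \eqref{eq:omda-z}, with $\bz=\bz_t$, $\bg=\bg_t$, $\bz^+=\bz_{t+1}$ and $\bu=\bz^\star$. This gives
\[
B(\bz^\star;\bz_{t+1})\le B(\bz^\star;\bz_t)-B(\bz_{t+1};\bz_t)-\eta\langle \bg_t,\bz_{t+1}-\bz^\star\rangle~.
\]
Then I split $\bz_{t+1}-\bz^\star=(\bz_{t+1}-\bz'_t)+(\bz'_t-\bz^\star)$ and drop the second part, which contributes a nonpositive term $-\eta\langle\bg_t,\bz'_t-\bz^\star\rangle\le 0$ by Lemma~\ref{lemma:gap}.

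Next I handle $-B(\bz_{t+1};\bz_t)-\eta\langle\bg_t,\bz_{t+1}-\bz'_t\rangle$. The non-Euclidean analogue of expanding the square is the three-point identity
\[
B(\bz_{t+1};\bz_t)=B(\bz_{t+1};\bz'_t)+B(\bz'_t;\bz_t)+\langle \nabla h(\bz'_t)-\nabla h(\bz_t),\bz_{t+1}-\bz'_t\rangle~.
\]
To control the cross term, I use the optimality condition \eqref{eq:md-opt} for $\bz'_t$. This iterate is produced by \eqref{eq:omda-zp} at step $t-1$ from $\bz_t$ with gradient $\bg_{t-1}$. Taking $\bu=\bz_{t+1}$ gives
\[
\langle \nabla h(\bz_t)-\eta\bg_{t-1}-\nabla h(\bz'_t),\bz_{t+1}-\bz'_t\rangle\le 0~.
\]
Combining the two displays yields
\[
B(\bz^\star;\bz_{t+1})\le B(\bz^\star;\bz_t)-B(\bz_{t+1};\bz'_t)-B(\bz'_t;\bz_t)-\eta\langle\bg_t-\bg_{t-1},\bz_{t+1}-\bz'_t\rangle~.
\]
This is where the differentiability of $h$ at $\bz_t,\bz'_t$ is used: both $\nabla h(\bz_t)$ and $\nabla h(\bz'_t)$ must exist. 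That holds for iterates in the relative interior, which is the standing assumption on $h$.

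Finally I bound the error term. By the definition of the dual norm and \eqref{eq:smooth}, it is at most $\eta L\|\bz'_t-\bz'_{t-1}\|\,\|\bz_{t+1}-\bz'_t\|$. The triangle inequality splits this into
\[
\eta L\|\bz'_t-\bz_t\|\,\|\bz_{t+1}-\bz'_t\|+\eta L\|\bz_t-\bz'_{t-1}\|\,\|\bz_{t+1}-\bz'_t\|~.
\]
Applying $ab\le \frac12 a^2+\frac12 b^2$ to each product gives
\[
\tfrac{\eta L}{2}\|\bz'_t-\bz_t\|^2+\eta L\|\bz_{t+1}-\bz'_t\|^2+\tfrac{\eta L}{2}\|\bz_t-\bz'_{t-1}\|^2~.
\]
Each squared norm is at most $2B(\cdot;\cdot)$ by \eqref{eq:sc}. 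This gives $\eta L\,B(\bz'_t;\bz_t)+2\eta L\,B(\bz_{t+1};\bz'_t)+\eta L\,B(\bz_t;\bz'_{t-1})$, which is exactly the coefficient pattern in \eqref{eq:bregman-step}.

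The one delicate point is orienting each divergence consistently, since Bregman divergences are not symmetric. The inequality \eqref{eq:sc} holds for either argument order, so every bound is valid. I only have to write $B(\bz_{t+1};\bz'_t)$, $B(\bz'_t;\bz_t)$ and $B(\bz_t;\bz'_{t-1})$ in the orientation the statement requires. I must also check that the two divergences coming from the three-point identity have the same orientation, so the terms combine.
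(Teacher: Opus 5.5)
Your proof is correct and follows the paper's argument essentially step by step. You use the three-point inequality for $\bz_{t+1}$, split at $\bz'_t$, combine the three-point identity with the optimality condition of $\bz'_t$ (taking $\bu=\bz_{t+1}$) to isolate $-\eta\langle \bg_t-\bg_{t-1},\bz_{t+1}-\bz'_t\rangle$, and then apply Lipschitzness, the triangle inequality, $ab\le\frac12(a^2+b^2)$ and \eqref{eq:sc}. The only cosmetic difference is that you drop the nonnegative gap term $\eta\langle\bg_t,\bz'_t-\bz^\star\rangle$ at the start, whereas the paper drops it in the last line.
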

\begin{proof}
By \eqref{eq:md-three},
\begin{align*}
B(\bz^\star;\bz_{t+1})
&\le B(\bz^\star;\bz_t)-B(\bz_{t+1};\bz_t)
-\eta\langle \bg_t,\bz_{t+1}-\bz^\star\rangle\\
&=B(\bz^\star;\bz_t)-B(\bz_{t+1};\bz_t)
-\eta\langle \bg_t,\bz_{t+1}-\bz'_t\rangle
-\eta\langle \bg_t,\bz'_t-\bz^\star\rangle~.
\end{align*}
The three-point identity for Bregman divergences gives
\[
-B(\bz_{t+1};\bz_t)
=-B(\bz_{t+1};\bz'_t)-B(\bz'_t;\bz_t)
+\langle \nabla h(\bz_t)-\nabla h(\bz'_t),\bz_{t+1}-\bz'_t\rangle~.
\]
Moreover, applying \eqref{eq:md-opt} to the update defining $\bz'_t$ yields
\[
\langle \nabla h(\bz_t)-\eta\bg_{t-1}-\nabla h(\bz'_t),
\bz_{t+1}-\bz'_t\rangle\le 0~.
\]
Hence,
\begin{align*}
-\eta\langle \bg_t,\bz_{t+1}-\bz'_t\rangle
+\langle \nabla h(\bz_t)-\nabla h(\bz'_t),\bz_{t+1}-\bz'_t\rangle
\le -\eta\langle \bg_t-\bg_{t-1},\bz_{t+1}-\bz'_t\rangle~.
\end{align*}
Combining the previous displays,
\begin{align*}
B(\bz^\star;\bz_{t+1})
&\le B(\bz^\star;\bz_t)-B(\bz_{t+1};\bz'_t)-B(\bz'_t;\bz_t)-\eta\langle \bg_t-\bg_{t-1},\bz_{t+1}-\bz'_t\rangle-\eta\langle \bg_t,\bz'_t-\bz^\star\rangle\\
&\le B(\bz^\star;\bz_t)-B(\bz_{t+1};\bz'_t)-B(\bz'_t;\bz_t)+\eta L\|\bz'_t-\bz'_{t-1}\|\,\|\bz_{t+1}-\bz'_t\|-\eta\langle \bg_t,\bz'_t-\bz^\star\rangle\\
&\le B(\bz^\star;\bz_t)-B(\bz_{t+1};\bz'_t)-B(\bz'_t;\bz_t)+\eta L(\|\bz'_t-\bz_t\|+\|\bz_t-\bz'_{t-1}\|)\|\bz_{t+1}-\bz'_t\|\\
&\quad -\eta\langle \bg_t,\bz'_t-\bz^\star\rangle\\
&\le B(\bz^\star;\bz_t)-B(\bz_{t+1};\bz'_t)-B(\bz'_t;\bz_t)+\frac{\eta L}{2}\|\bz'_t-\bz_t\|^2+\eta L\|\bz_{t+1}-\bz'_t\|^2+\frac{\eta L}{2}\|\bz_t-\bz'_{t-1}\|^2\\
&\quad-\eta\langle \bg_t,\bz'_t-\bz^\star\rangle~.
\end{align*}
Using~\eqref{eq:sc} on the three squared norms and dropping the nonnegative term $\eta\langle \bg_t,\bz'_t-\bz^\star\rangle$, which is nonnegative by Lemma~\ref{lemma:gap}, gives~\eqref{eq:bregman-step}.
\end{proof}

\begin{lemma}[Comparator form of the one-step inequality]
\label{lemma:comparator-step}
Let $(\bz_t,\bz'_t)$ be generated by \eqref{eq:omda-g}--\eqref{eq:omda-zp}. For every $\bu\in \mathcal{Z}$ and every $t\ge 1$,
\begin{align*}
B(\bu;\bz_{t+1})
+(1-2\eta L)B(\bz_{t+1};\bz'_t)+(1-\eta L)B(\bz'_t;\bz_t)
+\eta\langle G(\bz'_t),\bz'_t-\bu\rangle
\le B(\bu;\bz_t)+\eta L\, B(\bz_t;\bz'_{t-1})~.
\end{align*}
\end{lemma}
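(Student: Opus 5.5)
The plan is to repeat the proof of Lemma~\ref{lemma:bregman-step} verbatim with $\bz^\star$ replaced by an arbitrary $\bu\in\mathcal{Z}$. The only difference is at the very end: instead of dropping the term $\eta\langle \bg_t,\bz'_t-\bu\rangle$ via Lemma~\ref{lemma:gap}, I keep it on the left-hand side. No property of saddle points is used anywhere else in that argument, so the inequality holds for every comparator.

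Concretely, I will carry out the following steps in order.
\begin{itemize}
\item Apply \eqref{eq:md-three} to the update \eqref{eq:omda-z} with comparator $\bu$. This gives
\[
B(\bu;\bz_{t+1})\le B(\bu;\bz_t)-B(\bz_{t+1};\bz_t)-\eta\langle \bg_t,\bz_{t+1}-\bz'_t\rangle-\eta\langle \bg_t,\bz'_t-\bu\rangle~.
\]
\item Split $B(\bz_{t+1};\bz_t)$ with the three-point identity around $\bz'_t$. This produces $-B(\bz_{t+1};\bz'_t)-B(\bz'_t;\bz_t)$ plus the cross term $\langle \nabla h(\bz_t)-\nabla h(\bz'_t),\bz_{t+1}-\bz'_t\rangle$.
\item Bound the cross term using \eqref{eq:md-opt} for the update defining $\bz'_t$ (from $\bz_t$ with $\bg_{t-1}$), tested at the feasible point $\bz_{t+1}$. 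This leaves $-\eta\langle \bg_t-\bg_{t-1},\bz_{t+1}-\bz'_t\rangle$.
\item Bound that term by $\eta L\|\bz'_t-\bz'_{t-1}\|\,\|\bz_{t+1}-\bz'_t\|$, using H\"older and \eqref{eq:smooth}.
\item Apply the triangle inequality through $\bz_t$, then Young's inequality. This yields $\frac{\eta L}{2}\|\bz'_t-\bz_t\|^2+\eta L\|\bz_{t+1}-\bz'_t\|^2+\frac{\eta L}{2}\|\bz_t-\bz'_{t-1}\|^2$.
\item Convert each squared norm to a Bregman divergence via \eqref{eq:sc}, with $\frac12\|a-b\|^2\le B(a;b)$ and arguments oriented as in the statement. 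This gives $\eta L B(\bz'_t;\bz_t)+2\eta L B(\bz_{t+1};\bz'_t)+\eta L B(\bz_t;\bz'_{t-1})$.
\item Rearrange so that $B(\bu;\bz_{t+1})$, $(1-2\eta L)B(\bz_{t+1};\bz'_t)$, $(1-\eta L)B(\bz'_t;\bz_t)$ and $\eta\langle G(\bz'_t),\bz'_t-\bu\rangle$ sit on the left, and $B(\bu;\bz_t)+\eta L B(\bz_t;\bz'_{t-1})$ sits on the right.
\end{itemize}

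There is no real obstacle beyond bookkeeping. The one point to check is that every use of $\nabla h$ is at iterates $\bz_t,\bz'_t,\bz_{t+1}$, where $h$ is assumed differentiable, never at $\bu$. This holds because the three-point identity and \eqref{eq:md-three} only involve $\nabla h$ at the second argument, so the lemma remains valid even for $\bu$ on the boundary of the simplex in the entropic case.
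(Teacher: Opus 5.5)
Your proposal is correct and follows the paper's argument: the paper proves this lemma by rerunning the proof of Lemma~\ref{lemma:bregman-step} with $\bz^\star$ replaced by $\bu$, keeping $\eta\langle G(\bz'_t),\bz'_t-\bu\rangle$ rather than discarding it via Lemma~\ref{lemma:gap}, and your step-by-step bookkeeping reproduces that argument exactly. You also check that $\nabla h$ is only evaluated at the iterates and never at $\bu$. The paper leaves this implicit, but it matters because the lemma is later applied with the possibly boundary comparator $\bu=\bar{\bz}$.
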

\begin{proof}
The proof of Lemma~\ref{lemma:bregman-step} uses the saddle-point property only in its final line, where the term $\eta\langle G(\bz'_t),\bz'_t-\bz^\star\rangle$ is dropped. Repeating the same argument with an arbitrary comparator $\bu\in \mathcal{Z}$ and keeping this term gives the stated inequality.
\end{proof}

\begin{corollary}
\label{cor:bregman-consequences}
Assume $0<\eta<1/(3L)$. Let $\bz^\star\in \mathcal{Z}^\star$ and define
\[
A_t:=B(\bz_t;\bz'_{t-1}),\qquad
C_t:=B(\bz'_t;\bz_t),\qquad
D_t:=B(\bz^\star;\bz_t)
\]
for $t\ge 1$. Then, for all $M\ge 1$,
\begin{align}
D_{M+1}
&+(1-\eta L)\sum_{t=1}^M C_t
+(1-3\eta L)\sum_{t=2}^M A_t
+(1-2\eta L)A_{M+1} \le D_1+\eta L A_1~. \label{eq:bregman-summed}
\end{align}
Consequently,
\[
\sum_{t=1}^\infty B(\bz'_t;\bz_t)<\infty,
\qquad
\sum_{t=1}^\infty B(\bz_{t+1};\bz'_t)<\infty,
\]
and
\[
\bz'_t-\bz_t\to\bzero,\qquad
\bz_{t+1}-\bz'_t\to\bzero,\qquad
\bz_{t+1}-\bz_t\to\bzero~.
\]
Moreover, the sequence $(\bz_t)$ is bounded and $B(\bz^\star;\bz_t)$ converges.
\end{corollary}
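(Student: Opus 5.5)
The plan is to rewrite Lemma~\ref{lemma:bregman-step} in the notation $A_t,C_t,D_t$ and telescope it. With $\bz^\star$ in place of the comparator, \eqref{eq:bregman-step} reads, for every $t\ge 1$,
\[
D_{t+1}\le D_t-(1-2\eta L)A_{t+1}-(1-\eta L)C_t+\eta L A_t~.
\]
Summing over $t=1,\dots,M$, the left side gives $\sum_{t=2}^{M+1}D_t$ and the right side contains $\sum_{t=1}^M D_t$, so only $D_{M+1}$ and $D_1$ survive. The $A$-terms on the right are $-(1-2\eta L)\sum_{t=2}^{M+1}A_t+\eta L\sum_{t=1}^{M}A_t$. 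I will pull out the $t=1$ term $\eta L A_1$ and the $t=M+1$ term $-(1-2\eta L)A_{M+1}$. For the indices $2\le t\le M$ the coefficients combine to $-(1-2\eta L)+\eta L=-(1-3\eta L)$. Moving everything except $D_1+\eta L A_1$ to the left gives exactly \eqref{eq:bregman-summed}.

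For the consequences, I use $\eta<1/(3L)$. All three coefficients $1-\eta L$, $1-3\eta L$ and $1-2\eta L$ are then positive, and every Bregman term is nonnegative. Hence each term on the left of \eqref{eq:bregman-summed} is bounded by the constant $D_1+\eta L A_1$, uniformly in $M$. This bound is finite because $\bz_1$ and $\bz'_0$ lie where $h$ is differentiable.

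\begin{itemize}
\item Letting $M\to\infty$ shows $\sum_t C_t<\infty$ and $\sum_{t\ge 2}A_t<\infty$. The latter is the same as $\sum_{t\ge 1}B(\bz_{t+1};\bz'_t)<\infty$, since $A_1$ is finite.
\item By \eqref{eq:sc}, $\|\bz'_t-\bz_t\|^2\le 2C_t\to 0$ and $\|\bz_{t+1}-\bz'_t\|^2\le 2A_{t+1}\to 0$.
\item The triangle inequality then gives $\bz_{t+1}-\bz_t\to\bzero$.
\item Boundedness follows from $\frac12\|\bz^\star-\bz_{M+1}\|^2\le D_{M+1}\le D_1+\eta L A_1$.
\end{itemize}

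For the convergence of $D_t$, I will not argue from monotonicity, since $D_t$ itself need not be monotone. Instead I use the one-step inequality in the form $D_{t+1}+(1-2\eta L)A_{t+1}\le D_t+\eta LA_t$, dropping the $C_t$ term. Set $E_t:=D_t+\eta L A_t$. Then
\[
E_{t+1}\le E_t-(1-3\eta L)A_{t+1}\le E_t~,
\]
so $E_t$ is nonincreasing and bounded below by $0$, hence convergent. Since $A_t\to 0$, it follows that $D_t=E_t-\eta L A_t$ converges as well.

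The only delicate step is the index bookkeeping in the telescoping sum, in particular the boundary terms $A_1$ and $A_{M+1}$. Everything else is routine.
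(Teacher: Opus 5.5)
Your proposal is correct and follows the paper's route. You telescope Lemma~\ref{lemma:bregman-step} to get \eqref{eq:bregman-summed}, read off summability and the vanishing differences from the positive coefficients and \eqref{eq:sc}, and obtain convergence of $D_t$ from the nonincreasing quantity $D_t+\eta L A_t$ together with $A_t\to 0$. The differences are cosmetic: in the monotonicity step you keep $(1-3\eta L)A_{t+1}$ and drop $C_t$ where the paper does the reverse, and you get boundedness directly from \eqref{eq:bregman-summed} rather than from convergence of $D_t$.
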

\begin{proof}
Summing the inequality in Lemma~\ref{lemma:bregman-step} from $t=1$ to $M$ gives~\eqref{eq:bregman-summed}. Since $\eta<1/(3L)$, all coefficients on the left-hand side of~\eqref{eq:bregman-summed} are positive. This proves the two Bregman summability claims.
The norm convergence follows from the strong convexity lower bound~\eqref{eq:sc}.

The same one-step inequality also gives
\[
D_{t+1}+\eta L A_{t+1}
\le D_t+\eta L A_t-(1-\eta L)C_t,
\]
because $1-2\eta L\ge \eta L$. Thus, $D_t+\eta L A_t$ is nonincreasing and bounded below by zero, hence it converges. Since $A_t\to 0$, $D_t=B(\bz^\star;\bz_t)$ converges. Finally, \eqref{eq:sc} gives
\[
\frac12\|\bz_t-\bz^\star\|^2\le B(\bz^\star;\bz_t),
\]
so $(\bz_t)$ is bounded.
\end{proof}

\begin{lemma}
\label{lemma:interior-cluster}
Assume $0<\eta<1/(3L)$ and let $(\bz_t,\bz'_t)$ be generated by \eqref{eq:omda-g}--\eqref{eq:omda-zp}. Suppose that $(\bz_t)$ has a cluster point $\bar{\bz}$ at which $h$ is continuously differentiable in a neighborhood relative to $\mathcal{Z}$. Then, $\bar{\bz}\in \mathcal{Z}^\star$ and $\bz_t\to\bar{\bz}$.
\end{lemma}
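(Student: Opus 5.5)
I will follow Popov's argument (Theorem~\ref{thm:popov}), replacing Euclidean projections by the mirror step and using the $C^1$ assumption on $h$ near $\bar{\bz}$ exactly where a limit must be taken inside $\nabla h$.

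\textbf{Step 1: a subsequence and its neighbors.} Choose $t_k$ with $\bz_{t_k}\to\bar{\bz}$. Corollary~\ref{cor:bregman-consequences} gives $\bz'_{t_k}-\bz_{t_k}\to\bzero$ and $\bz_{t_k+1}-\bz_{t_k}\to\bzero$. Hence $\bz'_{t_k}\to\bar{\bz}$, $\bz_{t_k+1}\to\bar{\bz}$, and also $\bz'_{t_k-1}\to\bar{\bz}$. For large $k$ all these points lie in the neighborhood (relative to $\mathcal{Z}$) where $h$ is continuously differentiable.

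\textbf{Step 2: optimality of $\bar{\bz}$.} Apply \eqref{eq:md-opt} to the update \eqref{eq:omda-z}, with $\bz=\bz_{t_k}$, $\bz^+=\bz_{t_k+1}$ and $\bg=G(\bz'_{t_k})$. For every $\bu\in\mathcal{Z}$,
\[
\langle \nabla h(\bz_{t_k})-\nabla h(\bz_{t_k+1}),\bu-\bz_{t_k+1}\rangle\le \eta\langle G(\bz'_{t_k}),\bu-\bz_{t_k+1}\rangle~.
\]
By continuity of $\nabla h$ near $\bar{\bz}$, the difference $\nabla h(\bz_{t_k})-\nabla h(\bz_{t_k+1})$ tends to $\bzero$. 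Continuity of $G$ (it is Lipschitz) gives $G(\bz'_{t_k})\to G(\bar{\bz})$. Passing to the limit yields $\langle G(\bar{\bz}),\bu-\bar{\bz}\rangle\ge0$ for all $\bu\in\mathcal{Z}$.

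This variational inequality implies $\bar{\bz}\in\mathcal{Z}^\star$. Take $\bu=(\bx,\bar{\by})$ and use convexity of $f(\cdot,\bar{\by})$: this gives $f(\bx,\bar{\by})\ge f(\bar{\bx},\bar{\by})$. Symmetrically, concavity in $\by$ gives the other saddle inequality.

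\textbf{Step 3: convergence of the whole sequence.} Since $\bar{\bz}\in\mathcal{Z}^\star$, I use the telescoped inequality \eqref{eq:bregman-summed} with $\bz^\star=\bar{\bz}$, started at an arbitrary time $N$ instead of $1$. This is valid because Lemma~\ref{lemma:bregman-step} holds for every $t\ge1$, and it gives
\[
B(\bar{\bz};\bz_{M+1})\le B(\bar{\bz};\bz_N)+\eta L\,B(\bz_N;\bz'_{N-1}) \quad \text{for all } M\ge N.
\]
Take $N=t_k$.
\begin{itemize}
\item The term $B(\bz_N;\bz'_{N-1})$ tends to zero by the summability in Corollary~\ref{cor:bregman-consequences}.
\item The term $B(\bar{\bz};\bz_{t_k})$ tends to zero because $h$ and $\nabla h$ are continuous near $\bar{\bz}$ and $\bz_{t_k}\to\bar{\bz}$.
\end{itemize}
Then \eqref{eq:sc} gives $\tfrac12\|\bz_{M+1}-\bar{\bz}\|^2<\varepsilon$ for all $M\ge N$, so $\bz_t\to\bar{\bz}$.

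\textbf{Main obstacle.} The delicate points are the two limits involving $h$: the vanishing of $\nabla h(\bz_{t_k})-\nabla h(\bz_{t_k+1})$, and the vanishing of $B(\bar{\bz};\bz_{t_k})$. Both need $\nabla h$ continuous at $\bar{\bz}$, which is exactly the hypothesis. This is precisely what fails for the entropy at boundary cluster points. I would check carefully that continuity of $h$ itself at $\bar{\bz}$ is part of the assumption, so that $h(\bz_{t_k})\to h(\bar{\bz})$ and the Bregman term indeed vanishes.
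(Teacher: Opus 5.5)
Your proposal is correct and follows essentially the same route as the paper: pass to the limit in \eqref{eq:md-opt} along $t_k$ using continuity of $\nabla h$ near $\bar{\bz}$ to get the variational inequality, then use $B(\bar{\bz};\bz_{t_k})\to 0$ to conclude $\bz_t\to\bar{\bz}$. The only cosmetic difference is in Step 3. You re-telescope Lemma~\ref{lemma:bregman-step} from $N=t_k$, as in the paper's proof of Theorem~\ref{thm:popov}, whereas the paper invokes the convergence of $B(\bar{\bz};\bz_t)$ from Corollary~\ref{cor:bregman-consequences}; both rest on the same monotonicity of $B(\bar{\bz};\bz_t)+\eta L\,B(\bz_t;\bz'_{t-1})$.
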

\begin{proof}
Let $t_k$ be such that $\bz_{t_k}\to\bar{\bz}$. By Corollary~\ref{cor:bregman-consequences}, $\bz'_{t_k}\to\bar{\bz}$ and $\bz_{t_k+1}\to\bar{\bz}$.
Apply~\eqref{eq:md-opt} to the update defining $\bz_{t_k+1}$:
\[
\langle \nabla h(\bz_{t_k})-\eta G(\bz'_{t_k})-\nabla h(\bz_{t_k+1}),
\bu-\bz_{t_k+1}\rangle\le 0,
\qquad \forall \bu\in \mathcal{Z}~.
\]
Passing to the limit gives
\[
-\eta\langle G(\bar{\bz}),\bu-\bar{\bz}\rangle\le 0,
\qquad \forall \bu\in \mathcal{Z}~.
\]
Thus, $\bar{\bz}$ satisfies the variational inequality
\[
\langle G(\bar{\bz}),\bu-\bar{\bz}\rangle\ge 0,
\qquad \forall \bu\in \mathcal{Z},
\]
which is equivalent to $\bar{\bz}\in \mathcal{Z}^\star$ for differentiable convex-concave saddle-point problems.

By Corollary~\ref{cor:bregman-consequences}, $B(\bar{\bz};\bz_t)$ converges. Along the subsequence $t_k$, continuous differentiability at $\bar{\bz}$ gives $B(\bar{\bz};\bz_{t_k})\to 0$.
Therefore, the whole convergent scalar sequence $B(\bar{\bz};\bz_t)$ has limit zero. By \eqref{eq:sc},
\[
\frac12\|\bz_t-\bar{\bz}\|^2\le B(\bar{\bz};\bz_t)\to 0,
\]
so $\bz_t\to\bar{\bz}$.
\end{proof}

From the proof, it should be clear why this approach does not work for OMWU: the function $\psi$ for OMWU is not differentiable on the boundary of the simplex. Hence, one cannot take the required limit operation.

In the next section, we show a proof specialized to OMWU that follow a different route.

\section{Main Result}

In this section, we show our main result: the asymptotic convergence of OMWU.

\begin{theorem}
\label{thm:main}
Let $\mathcal{Z}=\Delta_n\times\Delta_m$.
Use OMWU with $0<\eta<1/(3L)$, and initialization in $\ri \mathcal{Z}$. Then, there exists $\bar{\bz}\in \mathcal{Z}^\star$ such that
\[
    \bz_t\to \bar{\bz},
    \qquad
    \bz'_t\to \bar{\bz}~.
\]
\end{theorem}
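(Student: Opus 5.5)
We specialize the framework to $h(\bz)=\sum_i x_i\ln x_i+\sum_j y_j\ln y_j$, which is $1$-strongly convex with respect to $\|\cdot\|_1$ on $\Delta_n\times\Delta_m$ (Pinsker); $B$ is the sum of the two KL divergences. Since the initialization is in $\ri\mathcal{Z}$, all iterates stay in $\ri\mathcal{Z}$. There the updates are explicit: $z_{t+1,i}\propto z_{t,i}\exp(-\eta g_{t,i})$ blockwise, and similarly for $\bz'_{t+1}$. So $h$ is differentiable at every iterate, and Lemma~\ref{lemma:bregman-step}, Lemma~\ref{lemma:comparator-step} and Corollary~\ref{cor:bregman-consequences} apply verbatim. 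Hence $(\bz_t)$ is bounded, $\bz_t-\bz'_t\to\bzero$, $\bz_{t+1}-\bz_t\to\bzero$, and $\mathrm{KL}(\bz^\star;\bz_t)$ converges for every $\bz^\star\in\mathcal{Z}^\star$.

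The plan has three steps. First, show every cluster point $\bar{\bz}$ is a saddle point. Second, show $\mathrm{KL}(\bar{\bz};\bz_t)\to 0$ along the subsequence. Third, conclude as in Lemma~\ref{lemma:interior-cluster}.

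For the first step, note that $\bar{\bz}\in\mathcal{Z}^\star$ iff the KKT conditions hold blockwise. With $\bar{\bg}=G(\bar{\bz})$ and $\lambda$ the minimum of $\bar g$ over the $x$-block, we need $\bar g_i=\lambda$ on $\supp\bar{\bx}$ and $\bar g_i\ge\lambda$ off it, and the same for $\by$. The support condition I expect to get from the multiplicative form. Write $z_{t+1,i}/z_{t,i}=\exp(-\eta g_{t,i})/Z_t$ with $Z_t=\sum_j z_{t,j}e^{-\eta g_{t,j}}$. For $i\in\supp\bar{\bx}$, the ratio tends to $1$ along $t_k$ because $\bz_{t+1}-\bz_t\to\bzero$ and $\bar x_i>0$. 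Hence $\eta\bar g_i=-\ln\bar Z$ is the same constant for all $i$ in the support; this is the "nonzero coordinates" part mentioned in the Related Work.

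The inactive inequalities $\bar g_i\ge\lambda$ for $\bar x_i=0$ are the main obstacle, and this is where the boundary argument is needed. I would argue by contradiction. Suppose $\bar g_i<\lambda-2\delta$ for some $i$ with $\bar x_i=0$. Whenever $\bz'_t$ is near $\bar{\bz}$, the multiplicative update then increases $\ln(x_{t,i}/x_{t,j})$ for $j$ in the support at rate at least $\eta\delta$ per step, so $x_{t,i}$ grows geometrically. Such growth is impossible if the trajectory spends long stretches near $\bar{\bz}$, since $\bar x_i=0$. To turn this into a contradiction, I would combine it with the Lyapunov quantity $\Phi_t=\mathrm{KL}(\bz^\star;\bz_t)+\eta L A_t$, which is nonincreasing for every saddle point $\bz^\star$. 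I would also use Lemma~\ref{lemma:comparator-step} with a comparator $\bu$ obtained from $\bar{\bz}$ by moving mass $\epsilon$ onto coordinate $i$. For this $\bu$, the term $\eta\langle G(\bz'_t),\bz'_t-\bu\rangle$ is near $\eta\langle\bar{\bg},\bar{\bz}-\bu\rangle\ge\eta\epsilon\delta>0$ whenever $\bz'_t$ is near $\bar{\bz}$. Summing over the time the trajectory spends near $\bar{\bz}$ forces $\mathrm{KL}(\bu;\bz_t)$ to decrease by a definite amount. But $\mathrm{KL}(\bu;\bz_t)\ge\epsilon\ln(\epsilon/x_{t,i})\to\infty$ as $x_{t,i}\to 0$ along the subsequence. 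The delicate part is controlling excursions away from $\bar{\bz}$, where no monotonicity is available for a non-saddle $\bu$. What I would try first is to show the decrease accumulated near $\bar{\bz}$ dominates. Alternatively, I would exploit that $\ln x_{t,i}$ changes by at most $2\eta\|\bg\|_\infty$ per step, together with the recurrent visits to $\bar{\bz}$, to obtain a contradiction.

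For the second and third steps, once $\bar{\bz}\in\mathcal{Z}^\star$, Corollary~\ref{cor:bregman-consequences} says $\mathrm{KL}(\bar{\bz};\bz_t)$ converges. Along $t_k$ we have $\mathrm{KL}(\bar{\bz};\bz_{t_k})=\sum_{\bar z_i>0}\bar z_i\ln(\bar z_i/z_{t_k,i})\to 0$, because only support coordinates enter, and these converge to positive limits. So this needs only continuity of $h$ on the support, not differentiability at the boundary. Hence $\mathrm{KL}(\bar{\bz};\bz_t)\to 0$, Pinsker gives $\bz_t\to\bar{\bz}$, and $\bz'_t\to\bar{\bz}$ follows from $\bz'_t-\bz_t\to\bzero$.
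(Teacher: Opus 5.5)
Your outer structure matches the paper. You use Corollary~\ref{cor:bregman-consequences} for boundedness and summability, the multiplicative ratios for constancy of the gradient on $\supp\bar{\bx}$ and $\supp\bar{\by}$, and the final step $B(\bar{\bz};\bz_{t_k})\to 0$ using only support coordinates. The gap is the inactive-coordinate inequality, which is the whole difficulty, and your sketch does not close it. Geometric growth of $x_{t,i}$ while $\bz'_t$ is near $\bar{\bz}$ gives a contradiction only if the trajectory \emph{stays} near $\bar{\bz}$ from some time on. Being a cluster point only gives recurrent visits. Since $x_{t_k,i}\to 0$, the $k$-th visit can start from an arbitrarily small value, so visits of any finite length are compatible with growth by $\rho^{\ell}$. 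Excursions away from $\bar{\bz}$ can then push $x_{t,i}$ back down. Your per-step bound on $\ln x_{t,i}$ does not rule this out. Your comparator $\bu$ with mass $\epsilon$ on coordinate $i$ has the same defect. Near $\bar{\bz}$, $B(\bu;\bz_t)$ is large and decreasing, which is perfectly consistent. It becomes contradictory only if the decrease accumulates forever, i.e., again only under trapping. And $B(\bu;\cdot)$ cannot produce a trap, because it is not small near $\bar{\bz}$ and is uncontrolled during excursions, as you note. The saddle-point Lyapunov functions $B(\bz^\star;\bz_t)+\eta L A_t$ say nothing about $\bar{\bz}$ before you know it is a saddle point.

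The missing idea is to use $\bar{\bz}$ itself as the comparator in Lemma~\ref{lemma:comparator-step} and to show the non-saddle error term is summable near $\bar{\bz}$. Two ingredients do this.

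First, let $I$ be the inactive coordinates where the sign fails and $r(\bz)$ their total mass. Then $\bar{\bz}$ satisfies the variational inequality on the face $\{z_i=0,\ i\in I\}$. Monotonicity of $G$ turns this into a Minty inequality on that face. Renormalizing $\bz$ onto the face moves it by at most $2r(\bz)$ in norm, so Lipschitzness gives $\langle G(\bz),\bz-\bar{\bz}\rangle\ge -C\,r(\bz)$ near $\bar{\bz}$.

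Second, near $\bar{\bz}$ the update multiplies every coordinate in $I$ by at least $\rho=e^{\eta\sigma}>1$. Data processing plus an elementary binary-KL bound then give $B(\bz'_t;\bz_t)\ge c_\rho\, r(\bz'_t)$. Since $\sum_t B(\bz'_t;\bz_t)<\infty$, the bad mass is summable over times spent near $\bar{\bz}$.

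Together these give $B(\bar{\bz};\bz_{t+1})\le B(\bar{\bz};\bz_t)+\eta C\,r(\bz'_t)+\eta L\,B(\bz_t;\bz'_{t-1})$ near $\bar{\bz}$, with summable increments. Take a late visit with $B(\bar{\bz};\bz_N)$ small, which exists because only support coordinates enter. A first-exit argument then shows the trajectory never leaves a neighborhood of $\bar{\bz}$. Inside that neighborhood the bad mass grows like $\rho^{t-N}$, contradicting that it stays below $1/2$.
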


To prove this result, we first specialize our general setting to the entropic one.

\subsection{Entropy on the Two Simplexes}
\label{sec:entropy}

Let
\[
\mathcal{X}=\Delta_n:=\left\{\bx\in\R^n_{\geq 0}:\sum_{i=1}^n x_i=1\right\},
\qquad
\mathcal{Y}=\Delta_m:=\left\{\by\in\R^m_{\geq 0}:\sum_{j=1}^m y_j=1\right\}~.
\]
Throughout this section, we use the product norm
\[
\|\bz\|=\sqrt{\|\bx\|_1^2+\|\by\|_1^2}~.
\]
Let the distance generating function be defined as
\[
h_{\mathcal{X}}(\bx)=\sum_{i=1}^n x_i\ln x_i,\qquad
h_{\mathcal{Y}}(\by)=\sum_{j=1}^m y_j\ln y_j,\qquad
h(\bz)=h_{\mathcal{X}}(\bx)+h_{\mathcal{Y}}(\by)~.
\]
The associated Bregman divergence is
\[
B(\bu;\bz)
=\sum_{i=1}^n u_{\mathcal{X},i} \ln\frac{u_{\mathcal{X},i}}{x_i}
+\sum_{j=1}^m u_{\mathcal{Y},j} \ln\frac{u_{\mathcal{Y},j}}{y_j},
\]
with the convention $0\ln 0=0$. If $\bz_t,\bz'_t\in\ri \mathcal{Z}$, then the OMDA iterates remain in $\ri \mathcal{Z}$.
By Pinsker's inequality on each simplex,
\[
B(\bu;\bz)\ge
\frac12\|\bu_{\mathcal{X}}-\bx\|_1^2+\frac12\|\bu_{\mathcal{Y}}-\by\|_1^2
=\frac12\|\bu-\bz\|^2,
\]
so the negative entropy is 1-strongly convex with respect to the above product norm.
For $\bp\in\Delta_n$ and $\bq\in\Delta_m$, write
\[
B_{\mathcal{X}}(\bp;\bx)
=\sum_{i=1}^n p_i\ln\frac{p_i}{x_i},
\qquad
B_{\mathcal{Y}}(\bq;\by)
=\sum_{j=1}^m q_j\ln\frac{q_j}{y_j}~.
\]

Writing
\[
\bg_{\mathcal{X},t}=\nabla_{\bx}f(\bz'_t),\qquad
\bg_{\mathcal{Y},t}=-\nabla_{\by}f(\bz'_t),
\]
the updates are
\begin{align}
x_{t+1,i}
&=\frac{x_{t,i}\exp(-\eta g_{\mathcal{X},t,i})} {\sum_{\ell=1}^n x_{t,\ell}\exp(-\eta g_{\mathcal{X},t,\ell})}, \label{eq:x-mw}\\
y_{t+1,j}
&=\frac{y_{t,j}\exp(-\eta g_{\mathcal{Y},t,j})} {\sum_{\ell=1}^m y_{t,\ell}\exp(-\eta g_{\mathcal{Y},t,\ell})}~. \label{eq:y-mw}
\end{align}
The primed iterate $\bz'_{t+1}$ satisfies the same formulas with base point $\bz_{t+1}$ and the same vector $\bg_t=(\bg_{\mathcal{X},t}, \bg_{\mathcal{Y},t})$.

We also state the optimality condition for a minimum over the simplex, that we will use to certify a saddle point over the simplex.

\begin{lemma}[Optimality conditions over the simplex]
\label{lemma:simplex-kkt}
Let $\phi:\Delta_d\to\R$ be differentiable and convex. Let $\bar{\bp}\in\Delta_d$, $S=\supp(\bar{\bp})$, and suppose that there is a scalar $a\in\R$ such that
\[
\frac{\partial \phi(\bar{\bp})}{\partial \bar{p}_i}=a, \qquad \forall i\in S~.
\]
Then, $\bar{\bp}$ minimizes $\phi$ over $\Delta_d$ if and only if
\[
\frac{\partial \phi(\bar{\bp})}{\partial \bar{p}_i}\ge a, \qquad \forall i\notin S~.
\]
\end{lemma}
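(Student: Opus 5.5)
The plan is to reduce the claim to the first-order characterization of minimizers of a differentiable convex function over a convex set. Write $\bg:=\nabla\phi(\bar{\bp})$. By convexity and differentiability, $\bar{\bp}$ minimizes $\phi$ over $\Delta_d$ if and only if
\[
\langle \bg,\bq-\bar{\bp}\rangle\ge 0,\qquad \forall \bq\in\Delta_d~.
\]
For sufficiency, the gradient inequality $\phi(\bq)\ge\phi(\bar{\bp})+\langle \bg,\bq-\bar{\bp}\rangle$ gives the claim immediately. For necessity, I would note that if $\langle \bg,\bq-\bar{\bp}\rangle<0$ for some $\bq$, then the directional derivative of $\phi$ along the feasible segment $\bar{\bp}+s(\bq-\bar{\bp})$, $s\in[0,1]$, is negative at $s=0$. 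Hence $\phi$ strictly decreases for small $s>0$, which contradicts minimality. So the whole proof reduces to showing that this variational inequality is equivalent to $g_i\ge a$ for all $i\notin S$.

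The key computation uses $\sum_i q_i=\sum_i\bar p_i=1$ and $\bar p_i=0$ for $i\notin S$:
\[
\langle \bg,\bq-\bar{\bp}\rangle=\sum_{i\in S} a(q_i-\bar p_i)+\sum_{i\notin S} g_i q_i
= a\Big(\sum_{i\in S}q_i-1\Big)+\sum_{i\notin S}g_iq_i=\sum_{i\notin S}(g_i-a)q_i~.
\]
The first equality uses $g_i=a$ on $S$ and $\bar p_i=0$ off $S$. The second uses $\sum_{i\in S}\bar p_i=1$. The last equality uses $\sum_{i\in S}q_i-1=-\sum_{i\notin S}q_i$.

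With this identity both directions are immediate. If $g_i\ge a$ for all $i\notin S$, the right-hand side is a sum of nonnegative terms, since $q_i\ge0$, so the variational inequality holds and $\bar{\bp}$ is a minimizer. Conversely, if $\bar{\bp}$ is a minimizer, I would fix $i\notin S$ and test the inequality with the vertex $\bq=\boldsymbol{e}_i\in\Delta_d$. This gives $g_i-a\ge0$.

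There is no real obstacle here. The only point needing care is necessity of the first-order condition when $\phi$ is defined only on $\Delta_d$. I would handle it either by taking differentiability to mean a differentiable extension to a neighborhood, or by using one-sided directional derivatives along segments inside the simplex, which is all the argument requires.
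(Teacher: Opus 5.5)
Your proposal is correct and follows the paper's proof: you reduce to the first-order variational inequality, use the identity $\langle \nabla\phi(\bar{\bp}),\bq-\bar{\bp}\rangle=\sum_{i\notin S}\bigl(\partial_i\phi(\bar{\bp})-a\bigr)q_i$, and read off both directions, taking vertices $\boldsymbol{e}_i$ for necessity. You also justify the first-order condition on a set with empty interior (via a differentiable extension or one-sided directional derivatives along segments), which the paper leaves implicit.
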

\begin{proof}
The first-order condition for minimizing the convex function $\phi$ over $\Delta_d$ is
\[
\langle \nabla\phi(\bar{\bp}),\bq-\bar{\bp}\rangle\ge 0, \qquad \forall \bq\in\Delta_d~.
\]
Using the constancy of $\frac{\partial \phi(\bar{\bp})}{\partial \bar{p}_i}$ on $S$ and $\sum_i(q_i-\bar p_i)=0$,
\[
\langle \nabla\phi(\bar{\bp}),\bq-\bar{\bp}\rangle
= \sum_{i\notin S}\left(\frac{\partial \phi(\bar{\bp})}{\partial \bar{p}_i}-a\right)q_i~.
\]
This is nonnegative for every $\bq\in\Delta_d$ if and only if $\frac{\partial \phi(\bar{\bp})}{\partial \bar{p}_i} - a \ge 0$ for every $i\notin S$.
\end{proof}

\subsection{Optimality for Active Coordinates}

We now consider the negative entropy case and assume $0<\eta<1/(3L)$. We also assume that the method is initialized in $\ri \mathcal{Z}$. Hence all finite iterates $\bz_t,\bz'_t$ remain in $\ri \mathcal{Z}$, and the multiplicative formulas \eqref{eq:x-mw}--\eqref{eq:y-mw} are valid at every finite time.


\begin{lemma}[Optimality of active coordinates of cluster points]
\label{lemma:face-stationarity}
Let $\mathcal{Z}=\Delta_n\times\Delta_m$.
Run OMWU with $0<\eta<1/(3L)$. Let $\bar{\bz}=(\bar{\bx},\bar{\by})$ be a cluster point of $(\bz_t)$. Set $S_{\bar{\bx}}:=\supp(\bar{\bx})$ and $S_{\bar{\by}}:=\supp(\bar{\by})$.
Then, there exist scalars $a_{\bar{\bx}},a_{\bar{\by}}\in\R$ such that
\[
\frac{\partial f(\bar{\bx},\bar{\by})}{\partial x_i}
=a_{\bar{\bx}},\qquad \forall i\in S_{\bar{\bx}},
\]
and
\[
-\frac{\partial f(\bar{\bx},\bar{\by})}{\partial y_j}
=a_{\bar{\by}},\qquad \forall j\in S_{\bar{\by}}~.
\]
\end{lemma}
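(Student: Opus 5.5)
We work with the multiplicative form of the update, \eqref{eq:x-mw}. For any two coordinates $i,k\in S_{\bar{\bx}}$, taking logarithms of ratios gives the exact identity
\[
\ln\frac{x_{t+1,i}}{x_{t+1,k}}-\ln\frac{x_{t,i}}{x_{t,k}}=-\eta\left(g_{\mathcal{X},t,i}-g_{\mathcal{X},t,k}\right),
\]
because the normalization cancels. The plan is to use this identity along a subsequence $t_k$ with $\bz_{t_k}\to\bar{\bz}$, and to argue by contradiction that the limiting partial derivatives of $f$ must agree on the support.

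First, by Corollary~\ref{cor:bregman-consequences}, $\bz_{t+1}-\bz_t\to\bzero$ and $\bz'_t-\bz_t\to\bzero$. Next, $\bar x_i,\bar x_k>0$. Because the iterates move slowly, for every fixed window length $W$ we have $\bz_{t_k+s}\to\bar{\bz}$ and $\bz'_{t_k+s}\to\bar{\bz}$ uniformly over $0\le s\le W$. By continuity of $G$, this also gives $\bg_{t_k+s}\to G(\bar{\bz})$ uniformly over that window.

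Now suppose, for contradiction, that $\delta:=\partial_i f(\bar{\bz})-\partial_k f(\bar{\bz})\neq 0$. Summing the identity over $s=0,\dots,W-1$ gives
\[
\ln\frac{x_{t_k+W,i}}{x_{t_k+W,k}}-\ln\frac{x_{t_k,i}}{x_{t_k,k}}=-\eta\sum_{s=0}^{W-1}\left(g_{\mathcal{X},t_k+s,i}-g_{\mathcal{X},t_k+s,k}\right).
\]
As $k\to\infty$ with $W$ fixed, the right-hand side tends to $-\eta W\delta$. The left-hand side tends to $\ln(\bar x_i/\bar x_k)-\ln(\bar x_i/\bar x_k)=0$, since both endpoints converge to $\bar{\bx}$, whose $i$ and $k$ coordinates are positive, so the logarithm is continuous there. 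Hence $\eta W\delta=0$, which forces $\delta=0$.

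Window length one already suffices: take $W=1$ and use $\bz_{t_k+1}\to\bar{\bz}$ and $\bg_{t_k}=G(\bz'_{t_k})\to G(\bar{\bz})$. Then $\eta\delta=0$ directly. So no contradiction argument is really needed; the identity passes to the limit, and the only point to check is that the log-ratio is continuous at $\bar{\bz}$ on the active coordinates.

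Setting $a_{\bar{\bx}}$ equal to the common value of $\partial_i f(\bar{\bz})$ over $i\in S_{\bar{\bx}}$ proves the first claim. The $\by$ part is identical, using \eqref{eq:y-mw} with $\bg_{\mathcal{Y}}=-\nabla_{\by}f$. The main subtlety is just this continuity: it is exactly why the argument only reaches coordinates in the support, while the inactive coordinates require the separate boundary argument.
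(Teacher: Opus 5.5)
Your proof is correct and is essentially the paper's argument: the paper keeps the log-normalizer $\alpha_{\mathcal{X},t}$ and shows that along $t_k$ it converges to $-\eta\,\partial f(\bar{\bz})/\partial x_i$ for every active $i$, while you cancel the normalizer by taking pairwise log-ratios. As you note yourself, the $W=1$ case already suffices, so the window argument can be dropped; also rename either the coordinate index $k$ or the subsequence index $t_k$, since you use the letter $k$ for both.
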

\begin{proof}
Choose $t_k$ such that $\bz_{t_k}\to\bar{\bz}$. By Corollary~\ref{cor:bregman-consequences},
\[
\bz'_{t_k}\to\bar{\bz},\qquad \bz_{t_k+1}\to\bar{\bz}~.
\]
For the $\mathcal{X}$-coordinate update \eqref{eq:x-mw}, define
\[
\alpha_{\mathcal{X},t}
:=\ln\left(\sum_{\ell=1}^n x_{t,\ell}\exp(-\eta g_{\mathcal{X},t,\ell})\right)~.
\]
Then
\[
\ln\frac{x_{t+1,i}}{x_{t,i}}=-\eta g_{\mathcal{X},t,i}-\alpha_{\mathcal{X},t}~.
\]
If $i\in S_{\bar{\bx}}$, then $x_{t_k,i}\to\bar x_i>0$ and
$x_{t_k+1,i}\to\bar x_i>0$, so
\[
\ln\frac{x_{t_k+1,i}}{x_{t_k,i}}\to 0~.
\]
Since $\bz'_{t_k}\to\bar{\bz}$ and $G$ is continuous,
\[
\alpha_{\mathcal{X},t_k}\to -\eta \frac{\partial f(\bar{\bx},\bar{\by})}{\partial x_i},
\qquad \forall i\in S_{\bar{\bx}}~.
\]
The left-hand side does not depend on $i$, so $\frac{\partial f(\bar{\bx},\bar{\by})}{\partial x_i}$ is constant on $S_{\bar{\bx}}$. Denote this constant by $a_{\bar{\bx}}$.

The proof for the ${\mathcal{Y}}$ block is identical.
\end{proof}

At this point, the convergence proof would then be over if you could prove an inequality on the inactive coordinate of $\bar{\bz}$, as specified in the next lemma.
\begin{lemma}[Inactive signs imply the full variational inequality]
\label{lemma:inactive-signs-to-vi}
Let $\mathcal{Z}=\Delta_n\times\Delta_m$.
Run OMWU with $0<\eta<1/(3L)$.
Let $\bar{\bz}=(\bar{\bx},\bar{\by})$ be a cluster point of $(\bz_t)$, set
\[
S_{\bar{\bx}}=\supp(\bar{\bx}),\qquad S_{\bar{\by}}=\supp(\bar{\by}),
\]
and let $a_{\bar{\bx}},a_{\bar{\by}}$ be the active-face constants from
Lemma~\ref{lemma:face-stationarity}. Suppose, in addition, that
\begin{align}
\frac{\partial f(\bar{\bx},\bar{\by})}{\partial x_i}&\ge a_{\bar{\bx}},
\qquad \forall i\notin S_{\bar{\bx}}, \label{eq:inactive-x-sign}\\
-\frac{\partial f(\bar{\bx},\bar{\by})}{\partial y_j}&\ge a_{\bar{\by}},
\qquad \forall j\notin S_{\bar{\by}}~. \label{eq:inactive-y-sign}
\end{align}
Then
\[
\langle G(\bar{\bz}),\bu-\bar{\bz}\rangle\ge 0,
\qquad \forall \bu\in\Delta_n\times\Delta_m,
\]
and hence $\bar{\bz}\in \mathcal{Z}^\star$.
\end{lemma}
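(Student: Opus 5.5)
The plan is to split the variational inequality into its two blocks and certify each with Lemma~\ref{lemma:simplex-kkt}. The function $G$ has components $G(\bar{\bz})=(\nabla_{\bx}f(\bar{\bx},\bar{\by}),-\nabla_{\by}f(\bar{\bx},\bar{\by}))$. For any $\bu=(\bp,\bq)\in\Delta_n\times\Delta_m$ we have
\[
\langle G(\bar{\bz}),\bu-\bar{\bz}\rangle=\langle \nabla_{\bx}f(\bar{\bx},\bar{\by}),\bp-\bar{\bx}\rangle+\langle -\nabla_{\by}f(\bar{\bx},\bar{\by}),\bq-\bar{\by}\rangle,
\]
so it suffices to show that each term is nonnegative.

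For the $\mathcal{X}$ block, take $\phi(\bp):=f(\bp,\bar{\by})$, which is convex and differentiable on $\Delta_n$. Lemma~\ref{lemma:face-stationarity} says its partial derivatives at $\bar{\bx}$ equal $a_{\bar{\bx}}$ on $S_{\bar{\bx}}$. Assumption~\eqref{eq:inactive-x-sign} gives $\partial_i\phi(\bar{\bx})\ge a_{\bar{\bx}}$ off $S_{\bar{\bx}}$. Lemma~\ref{lemma:simplex-kkt} then applies. More directly, its proof shows
\[
\langle \nabla\phi(\bar{\bx}),\bp-\bar{\bx}\rangle=\sum_{i\notin S_{\bar{\bx}}}(\partial_i\phi(\bar{\bx})-a_{\bar{\bx}})p_i\ge 0 .
\]
This holds because $\sum_i(p_i-\bar x_i)=0$ and $\bar x_i=0$ off the support.

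The $\mathcal{Y}$ block is handled the same way with $\psi(\bq):=-f(\bar{\bx},\bq)$, which is convex in $\bq$ by concavity of $f$. Here we use the constant $a_{\bar{\by}}$ and assumption~\eqref{eq:inactive-y-sign}. Adding the two blocks gives the variational inequality for all $\bu$.

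It remains to deduce $\bar{\bz}\in\mathcal{Z}^\star$. By the convex-block argument, $\bar{\bx}$ minimizes $f(\cdot,\bar{\by})$ over $\Delta_n$, since the first-order condition is sufficient for a convex function. Likewise $\bar{\by}$ minimizes $-f(\bar{\bx},\cdot)$, that is, it maximizes $f(\bar{\bx},\cdot)$ over $\Delta_m$. These two facts are exactly the saddle-point inequalities $f(\bar{\bx},\by)\le f(\bar{\bx},\bar{\by})\le f(\bx,\bar{\by})$.

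There is no real obstacle. The only points needing care are the sign convention for the $\by$ block and the observation that the per-block inequalities are the needed ones, not just their sum. The sum alone follows by taking $\bu$ that differs from $\bar{\bz}$ in only one block.
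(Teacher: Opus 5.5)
Your proof is correct and is essentially the paper's argument. The paper does the same computation, $\langle G(\bar{\bz}),\bu-\bar{\bz}\rangle=\sum_{i\notin S_{\bar{\bx}}}(\cdot)\,u_{\mathcal{X},i}+\sum_{j\notin S_{\bar{\by}}}(\cdot)\,u_{\mathcal{Y},j}\ge 0$ via active-face constancy and the zero-sum property, in a single display, and it only asserts the equivalence between the variational inequality and the saddle condition; you justify that equivalence explicitly through first-order sufficiency for the convex functions $f(\cdot,\bar{\by})$ and $-f(\bar{\bx},\cdot)$.
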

\begin{proof}
Let $\bu=(\bu_{\mathcal{X}},\bu_{\mathcal{Y}})\in\Delta_n\times\Delta_m$. By the active-face constancy and
$\sum_i (u_{\mathcal{X},i} - \bar x_i) = \sum_j (u_{\mathcal{Y},j} - \bar y_j)=0$,
\begin{align*}
\langle G(\bar{\bz}),\bu-\bar{\bz}\rangle
&=\sum_{i\notin S_{\bar{\bx}}}
\left(\frac{\partial f(\bar{\bx},\bar{\by})}{\partial x_i}-a_{\bar{\bx}}\right)u_{\mathcal{X},i}
+\sum_{j\notin S_{\bar{\by}}}
\left(-\frac{\partial f(\bar{\bx},\bar{\by})}{\partial y_j}-a_{\bar{\by}}\right)u_{\mathcal{Y},j}~.
\end{align*}
The right-hand side is nonnegative by \eqref{eq:inactive-x-sign} and
\eqref{eq:inactive-y-sign}. The variational inequality is equivalent to the saddle
condition for differentiable convex-concave saddle-point problems.
\end{proof}

The next section will prove this missing part.

\subsection{The Missing Piece by ChatGPT 5.5: Optimality of Inactive-Coordinates}
\label{sec:missing_piece}

For notational convenience, write
\[
    g_{{\mathcal{X}},i}(\bz):=\frac{\partial f(\bz)}{\partial x_i},
    \qquad
    g_{{\mathcal{Y}},j}(\bz):=-\frac{\partial f(\bz)}{\partial y_j}~.
\]

We shall prove that the inactive-coordinate sign conditions in Lemma~\ref{lemma:inactive-signs-to-vi} always hold. This part of the proof was proved by ChatGPT 5.5 Pro. Additional details on the failure of ChatGPT and the used prompts are in Appendix~\ref{sec:llm}.

\begin{lemma}[A residual bound near a partially verified point]
\label{lemma:residual}
Let $\bar{\bz}=(\bar{\bx},\bar{\by})\in \mathcal{Z}$, set
\[
    S_{\bar{\bx}}:=\supp(\bar{\bx}), \qquad S_{\bar{\by}}:=\supp(\bar{\by}),
\]
and suppose that there are scalars $a_{\bar{\bx}},a_{\bar{\by}}\in\R$ such that
\[
    g_{\mathcal{X},i}(\bar{\bz})=a_{\bar{\bx}}, \quad \forall i\in S_{\bar{\bx}}
    \qquad \text{ and }\qquad
    g_{\mathcal{Y},j}(\bar{\bz})=a_{\bar{\by}}, \quad \forall j\in S_{\bar{\by}}~.
\]
Define
\[
    I_{\bar{\bx}} := \{i\notin S_{\bar{\bx}}: g_{\mathcal{X},i}(\bar{\bz})<a_{\bar{\bx}}\}
    \qquad \text{ and }\qquad
    I_{\bar{\by}} := \{j\notin S_{\bar{\by}}: g_{\mathcal{Y},j}(\bar{\bz})<a_{\bar{\by}}\},
\]
and define the face
\begin{align*}
    \mathcal{F}_{\bar{\bx}}&:=\{x\in\Delta_n:x_i=0 \text{ for all } i\in I_{\bar{\bx}}\},\\
    \mathcal{F}_{\bar{\by}}&:=\{y\in\Delta_m:y_j=0 \text{ for all } j\in I_{\bar{\by}}\},\\
    \mathcal{F}_{\bar{\bz}}&:=\mathcal{F}_{\bar{\bx}}\times \mathcal{F}_{\bar{\by}}~.
\end{align*}
For $\bz=(\bx,\by)$, define
\[
    r_{\bar{\bx}}(\bx):=\sum_{i\in I_{\bar{\bx}}} x_i,
    \quad
    r_{\bar{\by}}(\by):=\sum_{j\in I_{\bar{\by}}} y_j,
    \quad \text{ and } \quad
    r(\bz):=r_{\bar{\bx}}(\bx)+r_{\bar{\by}}(\by)~.
\]
Then, there are a neighborhood $\mathcal{U}_{\bar{\bz}}$ of $\bar{\bz}$ in $\mathcal{Z}$ and a constant $C>0$ such that
\[
\langle G(\bz),\bz-\bar{\bz}\rangle \ge -C\, r(\bz), \qquad \forall \bz\in \mathcal{U}_{\bar{\bz}}~.
\]
\end{lemma}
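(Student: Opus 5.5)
The plan is to split the inner product at the reference point $\bar{\bz}$:
\[
\langle G(\bz),\bz-\bar{\bz}\rangle
=\langle G(\bz)-G(\bar{\bz}),\bz-\bar{\bz}\rangle+\langle G(\bar{\bz}),\bz-\bar{\bz}\rangle~.
\]
I will show that the first term is nonnegative by monotonicity, and that the second term is bounded below by $-C\,r(\bz)$ using the hypotheses on the active and inactive coordinates. The bound will in fact hold on all of $\mathcal{Z}$, so we may take $\mathcal{U}_{\bar{\bz}}=\mathcal{Z}$.

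\emph{Monotonicity.} I use the same first-order inequalities as in the proof of Lemma~\ref{lemma:gap}, now applied at two arbitrary points $\bz=(\bx,\by)$ and $\bw=(\bx',\by')$. Convexity of $f(\cdot,\by)$ and concavity of $f(\bx,\cdot)$ give
\[
\langle G(\bz),\bz-\bw\rangle\ge f(\bx,\by')-f(\bx',\by)~.
\]
Swapping the roles of $\bz$ and $\bw$ gives
\[
\langle G(\bw),\bw-\bz\rangle\ge f(\bx',\by)-f(\bx,\by')~.
\]
Adding the two inequalities yields $\langle G(\bz)-G(\bw),\bz-\bw\rangle\ge 0$. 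With $\bw=\bar{\bz}$, the first term of the split is nonnegative.

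\emph{Linear term.} I repeat the computation from the proof of Lemma~\ref{lemma:inactive-signs-to-vi} with $\bu=\bz$. Since $g_{\mathcal{X},i}(\bar{\bz})=a_{\bar{\bx}}$ on $S_{\bar{\bx}}$, $\bar x_i=0$ off $S_{\bar{\bx}}$, and $\sum_i(x_i-\bar x_i)=0$, the $\mathcal{X}$ block becomes
\[
\sum_{i\notin S_{\bar{\bx}}}\bigl(g_{\mathcal{X},i}(\bar{\bz})-a_{\bar{\bx}}\bigr)x_i~,
\]
and the $\mathcal{Y}$ block is analogous. For indices $i\notin S_{\bar{\bx}}$ with $i\notin I_{\bar{\bx}}$, the coefficient is nonnegative and $x_i\ge 0$, so these terms can be dropped. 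For $i\in I_{\bar{\bx}}$, the coefficient is negative but bounded below by $-C$, where
\[
C:=1+\max\Bigl\{\max_{i\in I_{\bar{\bx}}}\bigl(a_{\bar{\bx}}-g_{\mathcal{X},i}(\bar{\bz})\bigr),\ \max_{j\in I_{\bar{\by}}}\bigl(a_{\bar{\by}}-g_{\mathcal{Y},j}(\bar{\bz})\bigr)\Bigr\}~.
\]
Maxima over empty sets are read as $0$, and the added $1$ guarantees $C>0$. Hence $\langle G(\bar{\bz}),\bz-\bar{\bz}\rangle\ge -C\,r(\bz)$, which together with the monotonicity step proves the claim.

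\emph{Main obstacle.} The only step with real content is recognizing that monotonicity of $G$ disposes of the nonlinear part. A local Taylor expansion of $G$ around $\bar{\bz}$ would instead produce a quadratic error term in $\|\bz-\bar{\bz}\|$, which is not controlled by $r(\bz)$. Once monotonicity is used, no neighborhood restriction is needed, and the rest is bookkeeping with the simplex constraints.
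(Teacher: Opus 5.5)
Your proof is correct, and it takes a different and shorter route than the paper.

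The paper's proof goes through the face in three steps:
\begin{itemize}
\item It first proves the face variational inequality $\langle G(\bar{\bz}),\bu-\bar{\bz}\rangle\ge 0$ for $\bu\in\mathcal{F}_{\bar{\bz}}$.
\item It upgrades this, via monotonicity, to the Minty inequality $\langle G(\bu),\bu-\bar{\bz}\rangle\ge 0$ on the face.
\item It transfers the bound to a general $\bz$ by evaluating at the renormalized point $\Pi_{\mathcal{F}_{\bar{\bz}}}\bz$. The cost $\|\bz-\Pi_{\mathcal{F}_{\bar{\bz}}}\bz\|\le 2r(\bz)$ is paid through the Lipschitz constant and the bounds $D$, $M$.
\end{itemize}
This yields $C=2(LD+M)$. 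The neighborhood is needed only so that $r_{\bar{\bx}},r_{\bar{\by}}<1$ and the renormalization is defined.

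You instead apply monotonicity once, directly between $\bz$ and $\bar{\bz}$. This reduces everything to the linear function $\bz\mapsto\langle G(\bar{\bz}),\bz-\bar{\bz}\rangle$. Its coordinate expansion, the one from Lemma~\ref{lemma:inactive-signs-to-vi}, shows that the only negative contributions are $(g_{\mathcal{X},i}(\bar{\bz})-a_{\bar{\bx}})x_i$ for $i\in I_{\bar{\bx}}$, together with their $\mathcal{Y}$ analogues.

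Your route gives three things:
\begin{itemize}
\item The bound holds on all of $\mathcal{Z}$, so no neighborhood is needed.
\item It uses neither the Lipschitz property of $G$ nor the compactness constants $D$ and $M$.
\item The constant is explicit and depends only on the size of the inactive-coordinate violations at $\bar{\bz}$.
\end{itemize}
The paper's detour through the Minty inequality on the face and the projection buys nothing extra for this lemma. Lemma~\ref{lemma:inactive} only uses the bound locally, so either version suffices downstream. Yours removes one of the neighborhoods that must be intersected there.
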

\begin{proof}
First, we show that $\bar{\bz}$ satisfies the variational inequality on the face $\mathcal{F}_{\bar{\bz}}$. Since $I_{\bar{\bx}} \cap S_{\bar{\bx}}=\varnothing$ and $I_{\bar{\by}}\cap S_{\bar{\by}}=\varnothing$, we have $\bar{\bz}\in \mathcal{F}_{\bar{\bz}}$. For every $\bu_{\mathcal{X}} \in \mathcal{F}_{\bar{\bx}}$,
\begin{align*}
\langle \nabla_{\bx} f(\bar{\bz}),\bu_{\mathcal{X}}-\bar{\bx}\rangle
&=\sum_i g_{\mathcal{X},i}(\bar{\bz})(u_{\mathcal{X},i}-\bar x_i)
=\sum_i (g_{\mathcal{X},i}(\bar{\bz})-a_{\bar{\bx}})(u_{\mathcal{X},i}-\bar x_i)\\
&=\sum_{i\notin S_{\bar{\bx}},\,i\notin I_{\bar{\bx}}}(g_{\mathcal{X},i}(\bar{\bz})-a_{\bar{\bx}})u_{\mathcal{X},i}
\ge 0,
\end{align*}
where the second equality uses $\sum_i(u_{\mathcal{X},i}-\bar x_i)=0$, the third equality uses $\bar x_i=0$ for $i\notin S_{\bar{\bx}}$, $u_{\mathcal{X},i}=0$ for $i\in I_{\bar{\bx}}$, and $g_{\mathcal{X},i}(\bar{\bz})=a_{\bar{\bx}}$ for $i\in S_{\bar{\bx}}$, and the last inequality follows because if $i\notin S_{\bar{\bx}}$ and $i\notin I_{\bar{\bx}}$, then by the definition of $I_{\bar{\bx}}$ we have $g_{\mathcal{X},i}(\bar{\bz})\ge a_{\bar{\bx}}$.

Similarly, for every $\bu_{\mathcal{Y}} \in \mathcal{F}_{\bar{\by}}$,
\[
    \langle -\nabla_{\by} f(\bar{\bz}), \bu_{\mathcal{Y}} - \bar{\by} \rangle\ge 0~.
\]
Thus, for every $\bu\in \mathcal{F}_{\bar{\bz}}$,
\begin{equation}
    \label{eq:face_vi}
    \langle G(\bar{\bz}), \bu - \bar{\bz}\rangle\ge 0~.
\end{equation}

Next, the saddle operator $G=(\nabla_{\bx} f,-\nabla_{\by} f)$ is \emph{monotone}. Indeed,
for $\bz=(\bx,\by)$ and $\bw=(\tilde{\bx},\tilde{\by})$, convexity in $\bx$ gives
\begin{align*}
\langle \nabla_{\bx} f(\bx,\by)-\nabla_{\bx} f(\tilde{\bx},\tilde{\by}),\bx-\tilde{ \bx}\rangle
\ge f(\bx,\by)-f(\tilde{\bx},\by)+f(\tilde{\bx},\tilde{\by})-f(\bx,\tilde{\by}),
\end{align*}
and concavity in $\by$ gives
\begin{align*}
\langle -\nabla_{\by} f(\bx,\by)+\nabla_{\by} f(\tilde{\bx},\tilde{\by}),\by-\tilde{\by}\rangle
\ge f(\bx,\tilde{\by})-f(\bx,\by)+f(\tilde{\bx},\by)-f(\tilde{\bx},\tilde{\by})~.
\end{align*}
Adding these two inequalities yields
\begin{equation}
    \label{eq:monotone}
    \langle G(\bz)-G(\bw),\bz-\bw\rangle\ge 0, \qquad \forall \bz,\bw\in \mathcal{Z}~.
\end{equation}
Applying~\eqref{eq:monotone} with $\bz=\bu\in \mathcal{F}_{\bar{\bz}}$ and $\bw=\bar{\bz}$, then adding~\eqref{eq:face_vi}, gives the Minty inequality
\begin{equation}
    \label{eq:minty_face}
    \langle G(\bu),\bu-\bar{\bz}\rangle\ge 0, \qquad \forall \bu\in \mathcal{F}_{\bar{\bz}}~.
\end{equation}

Now, choose a neighborhood $\mathcal{U}_{\bar{\bz}}$ of $\bar{\bz}$ small enough that $r_{\bar{\bx}}(\bx)<1$ and $r_{\bar{\by}}(\by)<1$ for every $\bz=(\bx,\by)\in \mathcal{U}_{\bar{\bz}}$. This is possible because $r_{\bar{\bx}}(\bar{\bx})=0$ and $r_{\bar{\by}}(\bar{\by})=0$, and by continuity $r$ remains small in a sufficiently small neighborhood of $\bar{\bz}$.
For such $\bz$, define $\Pi_{\mathcal{F}_{\bar{\bz}}} \bz=(\Pi_{\mathcal{F}_{\bar{\bx}}} \bx,\Pi_{\mathcal{F}_{\bar{\by}}} \by)\in \mathcal{F}_{\bar{\bz}}$ by deleting the coordinates in $I_{\bar{\bx}}$ and renormalizing separately in the two simplexes:
\[
    (\Pi_{\mathcal{F}_{\bar{\bx}}} \bx)_i=
    \begin{cases}
    \displaystyle \frac{x_i}{1-r_{\bar{\bx}}(\bx)}, & i\notin I_{\bar{\bx}}\\[1ex]
    0, & i\in I_{\bar{\bx}}
    \end{cases},
\]
and analogously for $\by$. In one simplex, deleting mass $r_{\bar{\bx}}(\bx)$ changes the deleted coordinates by $r_{\bar{\bx}}(\bx)$ and the remaining coordinates by another $r_{\bar{\bx}}(\bx)$. Hence,
\[
    \|\bx-\Pi_{\mathcal{F}_{\bar{\bx}}} \bx\|_1=2r_{\bar{\bx}}(\bx),
    \qquad
    \|\by-\Pi_{\mathcal{F}_{\bar{\by}}} \by\|_1=2r_{\bar{\by}}(\by),
\]
and, for the product norm used in Section~\ref{sec:entropy},
\begin{equation}
    \label{eq:projection_bad_mass}
    \|\bz-\Pi_{\mathcal{F}_{\bar{\bz}}} \bz\|
    =\sqrt{\|\bx-\Pi_{\mathcal{F}_{\bar{\bx}}} \bx\|_1^2+\|\by-\Pi_{\mathcal{F}_{\bar{\by}}} \by\|_1^2}
    \leq 2r(\bz)~.
\end{equation}
Let
\[
    D:=\sup_{\bz,\bw\in \mathcal{Z}} \ \|\bz-\bw\|,
    \qquad
    M:=\sup_{\bz\in \mathcal{Z}} \ \|G(\bz)\|_\star~.
\]
Both constants are finite because $\mathcal{Z}$ is compact and $G$ is continuous. By
\eqref{eq:minty_face},
\[
    \langle G(\Pi_{\mathcal{F}_{\bar{\bz}}} \bz),\Pi_{\mathcal{F}_{\bar{\bz}}} \bz-\bar{\bz}\rangle\ge 0~.
\]
Therefore, using the Lipschitzness of $G$,
\[
\begin{aligned}
\langle G(\bz),\bz-\bar{\bz}\rangle
&=\langle G(\Pi_{\mathcal{F}_{\bar{\bz}}} \bz),\Pi_{\mathcal{F}_{\bar{\bz}}} \bz-\bar{\bz}\rangle
+\langle G(\bz)-G(\Pi_{\mathcal{F}_{\bar{\bz}}} \bz),\bz-\bar{\bz}\rangle
+\langle G(\Pi_{\mathcal{F}_{\bar{\bz}}}\bz),\bz-\Pi_{\mathcal{F}_{\bar{\bz}}}\bz\rangle\\
&\ge -L\|\bz-\Pi_{\mathcal{F}_{\bar{\bz}}}\bz\|\|\bz-\bar{\bz}\|- M \|\bz-\Pi_{\mathcal{F}_{\bar{\bz}}}\bz\|
\ge -2(LD+M)r(\bz)~.
\end{aligned}
\]
The claim follows with $C=2(LD+M)$.
\end{proof}

Next, we state a technical lemma on the KL divergence.

\begin{lemma}
\label{lemma:kl_lower}
Let $\rho>1$, and let $0\le p,q\le 1$ satisfy $q\ge \rho p$. Then,
\[
    q\ln\frac qp+(1-q)\ln\frac{1-q}{1-p}
    \ge c_\rho q,
    \qquad
    c_\rho:=\ln\rho-1+\frac1\rho>0,
\]
with the usual convention that the left-hand side is $+\infty$ when $p=0<q$.
\end{lemma}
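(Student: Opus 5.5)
The plan is to view the left-hand side as a function of $q$ for fixed $p$ and reduce to the boundary case $q=\rho p$. First I dispose of the degenerate cases. If $q=0$, both sides vanish: $p\le q/\rho=0$, so the left-hand side is $0\ln 0+1\cdot\ln 1=0$. If $p=0<q$, the left-hand side is $+\infty$ by the stated convention. If $q=1$, the second term is $0$ and the left-hand side equals $\ln(1/p)\ge\ln\rho\ge c_\rho=c_\rho q$, using $1/\rho-1<0$. So it remains to treat $0<p$, $0<q<1$, and $q\ge\rho p$.

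Let $\phi(q):=q\ln\frac qp+(1-q)\ln\frac{1-q}{1-p}-c_\rho q$. Its derivative is
\[
\phi'(q)=\ln\frac qp-\ln\frac{1-q}{1-p}-c_\rho .
\]
When $q\ge\rho p$ we have $q\ge p$, so $\frac{1-q}{1-p}\le 1$ and the middle term is nonnegative. Hence $\phi'(q)\ge\ln\rho-c_\rho=1-1/\rho>0$. Thus $\phi$ is increasing on $[\rho p,1)$, and it suffices to prove $\phi(\rho p)\ge0$.

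For that boundary value I handle the two terms separately. The first term is $\rho p\ln\rho$. For the second, I use $\ln x\ge 1-1/x$ with $x=\frac{1-q}{1-p}$, which gives
\[
(1-q)\ln\frac{1-q}{1-p}\ge (1-q)-(1-p)=p-q=-(\rho-1)p .
\]
Combining the two,
\[
\phi(\rho p)\ge\rho p\ln\rho-(\rho-1)p-c_\rho\rho p=\rho p\left(\ln\rho-1+\tfrac1\rho-c_\rho\right)=0 .
\]
This also shows why $c_\rho$ has its particular form: it is exactly the constant that makes this boundary bound tight.

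Finally, $c_\rho>0$ for $\rho>1$ follows from $\ln\rho>1-1/\rho$, the strict version of the same elementary inequality. There is no real obstacle here. The only points needing care are the derivative sign, which uses $q\ge p$, and the edge cases $q=1$, $p=0$, and $q=0$, which are handled by direct evaluation before the calculus argument.
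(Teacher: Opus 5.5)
Your proof is correct, and its key estimate is the same one the paper uses. The paper's integral bound $\ln\frac{1-q}{1-q/s}\ge-\frac{q-q/s}{1-q}$ (with $s=q/p$) is exactly your inequality $\ln x\ge 1-1/x$ at $x=\frac{1-q}{1-p}$, i.e., $(1-q)\ln\frac{1-q}{1-p}\ge p-q$. The difference is where the monotonicity enters. You first show, by differentiating the exact divergence in $q$ for fixed $p$, that $d(q,p)-c_\rho q$ is increasing on $[\rho p,1)$, and then apply the estimate only at the endpoint $q=\rho p$. The paper applies the estimate at every $q$, obtaining $d(q,p)\ge q\left(\ln s-1+\frac1s\right)$, and then uses that $s\mapsto\ln s-1+\frac1s$ is increasing for $s>1$. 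Since the estimate holds for all $q$, your derivative step is not actually needed. The paper's route is one step shorter and gives a slightly sharper pointwise bound, with the constant evaluated at the actual ratio $q/p$ rather than at $\rho$. On the other hand, your edge cases are handled more directly. You evaluate $q=1$ explicitly ($\ln(1/p)\ge\ln\rho\ge c_\rho$) instead of taking the limit $q\uparrow 1$, and you note that $q=0$ forces $p=0$.
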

\begin{proof}
If $q=0$, there is nothing to prove. If $p=0<q$, the left-hand side is $+\infty$. Thus, assume $0<p\le q<1$. Write $q=sp$, where $s\ge \rho$. Then
\[
    d(q,p):=q\ln\frac qp+(1-q)\ln\frac{1-q}{1-p}
    =q\ln s+(1-q)\ln\frac{1-q}{1-q/s}~.
\]
Furthermore,
\[
    \ln\frac{1-q}{1-q/s}
    =-\int_{q/s}^{q}\frac{du}{1-u}~.
\]
Since $u\le q$ on this interval,
\[
    \frac1{1-u}\le \frac1{1-q},
\]
and hence
\[
    \ln\frac{1-q}{1-q/s}
    \ge -\frac{q-q/s}{1-q}~.
\]
Multiplying by $1-q$,
\[
    (1-q)\ln\frac{1-q}{1-q/s}
    \ge -q\left(1-\frac1s\right)~.
\]
Therefore
\[
    d(q,p)
    \ge q\left(\ln s-1+\frac1s\right)~.
\]
The function $\phi(s)=\ln s-1+s^{-1}$ satisfies
\[
    \phi'(s)
    =\frac{s-1}{s^2}>0 \qquad (s>1),
\]
so $\phi(s)\ge \phi(\rho)=c_\rho$. This proves the claim for $q<1$, and the case $q=1$ follows by taking the limit $q\uparrow 1$.
\end{proof}

\begin{lemma}
\label{lemma:local_bad_mass_summable}
Let $\bar{\bz}$, $I_{\bar{\bx}}$, and $I_{\bar{\by}}$ be as in Lemma~\ref{lemma:residual}, and suppose that $I_{\bar{\bx}} \cup I_{\bar{\by}}\ne\varnothing$. There are a neighborhood $\mathcal{U}_{\bar{\bz}}$ of $\bar{\bz}$, and a constant $c_\rho>0$ such that, whenever $\bz_t,\bz'_{t-1}\in \mathcal{U}_{\bar{\bz}}$,
\begin{equation}
    \label{eq:kl_bad_mass}
    B(\bz'_t;\bz_t)\ge c_\rho r(\bz'_t)~.
\end{equation}
Consequently,
\begin{equation}
    \label{eq:local_bad_mass_summable}
    \sum_{\substack{t\ge 1:\, \bz_t\in \mathcal{U}_{\bar{\bz}},\,\bz'_{t-1}\in \mathcal{U}_{\bar{\bz}}}} r(\bz'_t)<\infty~.
\end{equation}
Moreover, whenever $\bz_t,\bz'_{t}\in \mathcal{U}_{\bar{\bz}}$, there exists $\rho>1$ such that
\begin{equation}
\label{eq:expansion}
x_{t+1,i}\geq \rho x_{t,i}, \quad \forall i \in I_{\bar{\bx}},
\quad \text{ and } \quad
y_{t+1,j}\geq \rho y_{t,j}, \quad \forall j \in I_{\bar{\by}}~.
\end{equation}
\end{lemma}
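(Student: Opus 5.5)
The plan is to exploit the strict gap in the definition of $I_{\bar{\bx}}$ and $I_{\bar{\by}}$. Set
\[
\delta:=\min\Big\{\min_{i\in I_{\bar{\bx}}}(a_{\bar{\bx}}-g_{\mathcal{X},i}(\bar{\bz})),\ \min_{j\in I_{\bar{\by}}}(a_{\bar{\by}}-g_{\mathcal{Y},j}(\bar{\bz}))\Big\}>0.
\]
By continuity of $G$, I will choose $\mathcal{U}_{\bar{\bz}}$ small enough that for every $\bw\in\mathcal{U}_{\bar{\bz}}$ and every $i\in I_{\bar{\bx}}$ we have $g_{\mathcal{X},i}(\bw)\le a_{\bar{\bx}}-\tfrac{2\delta}{3}$. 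I will also require $g_{\mathcal{X},\ell}(\bw)\ge a_{\bar{\bx}}-\tfrac{\delta}{3}$ for every $\ell\notin I_{\bar{\bx}}$ with $\ell\in S_{\bar{\bx}}$, and the analogous bounds for $\mathcal{Y}$. Indices outside $S_{\bar{\bx}}\cup I_{\bar{\bx}}$ already satisfy $g\ge a_{\bar{\bx}}$ at $\bar{\bz}$, so the same lower bound holds near $\bar{\bz}$ as well. In short, on $\mathcal{U}_{\bar{\bz}}$ every bad coordinate has gradient at least $\delta/3$ below every good coordinate.

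The expansion claim \eqref{eq:expansion} comes first. The multiplicative update gives
\[
\frac{x_{t+1,i}}{x_{t,i}}=\frac{\exp(-\eta g_{\mathcal{X},t,i})}{\sum_\ell x_{t,\ell}\exp(-\eta g_{\mathcal{X},t,\ell})},
\qquad \bg_t=G(\bz'_t),\quad \bz'_t\in\mathcal{U}_{\bar{\bz}}.
\]
To bound the denominator, split it into bad indices (total mass $r_{\bar{\bx}}(\bx_t)$) and good indices. Each good term is at most $\exp(-\eta(a_{\bar{\bx}}-\delta/3))$. Each bad term is bounded using the bound $\|G\|_\star\le M$, giving at most $\exp(\eta M)$. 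Hence, for $i\in I_{\bar{\bx}}$,
\[
\frac{x_{t+1,i}}{x_{t,i}}\ge \frac{e^{-\eta(a_{\bar{\bx}}-2\delta/3)}}{e^{-\eta(a_{\bar{\bx}}-\delta/3)}+r_{\bar{\bx}}(\bx_t)e^{\eta M}}.
\]
After shrinking $\mathcal{U}_{\bar{\bz}}$ so that $r_{\bar{\bx}}$ is tiny there, this ratio is at least $\rho:=e^{\eta\delta/6}>1$, uniformly. The $\mathcal{Y}$ block is identical, and I take the smaller of the two $\rho$'s.

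The same computation applies to the primed update $\bz'_t$, which uses base point $\bz_t$ and gradient $\bg_{t-1}=G(\bz'_{t-1})$. When $\bz_t,\bz'_{t-1}\in\mathcal{U}_{\bar{\bz}}$, this gives $x'_{t,i}\ge\rho x_{t,i}$ for $i\in I_{\bar{\bx}}$. Summing over $I_{\bar{\bx}}$ yields $q:=r_{\bar{\bx}}(\bx'_t)\ge\rho\, r_{\bar{\bx}}(\bx_t)=:\rho p$.

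Next comes \eqref{eq:kl_bad_mass}. By the data processing (log-sum) inequality, coarse-graining $\Delta_n$ into the two cells $I_{\bar{\bx}}$ and its complement can only decrease KL, so $B_{\mathcal{X}}(\bx'_t;\bx_t)\ge d(q,p)$. Lemma~\ref{lemma:kl_lower} then gives $d(q,p)\ge c_\rho q=c_\rho r_{\bar{\bx}}(\bx'_t)$. Adding the $\mathcal{Y}$ block, where an empty $I$ contributes zero, gives $B(\bz'_t;\bz_t)\ge c_\rho r(\bz'_t)$. The summability \eqref{eq:local_bad_mass_summable} then follows immediately from $\sum_t B(\bz'_t;\bz_t)<\infty$ in Corollary~\ref{cor:bregman-consequences}.

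The main obstacle is making the denominator bound uniform. Bad coordinates could in principle carry large weight, which would spoil the comparison. This is handled by shrinking the neighborhood so that $r$ is small, keeping good-coordinate mass near $1$, and by the explicit gradient gap $\delta/3$. The remaining care is bookkeeping: which iterates must lie in $\mathcal{U}_{\bar{\bz}}$ for each claim (the gradient point and the base point). Here $\bz_t,\bz'_{t-1}$ are needed for the primed step, and $\bz_t,\bz'_t$ for the unprimed step.
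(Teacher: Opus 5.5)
Your proposal is correct and takes essentially the same route as the paper. Both arguments get a uniform gradient gap on a small neighborhood, bound the normalizer by making the bad mass $r$ small, and derive the factor-$\rho$ expansion for both the primed update (needing $\bz_t,\bz'_{t-1}$ in the neighborhood) and the unprimed one (needing $\bz_t,\bz'_t$); they then conclude via data processing, Lemma~\ref{lemma:kl_lower}, and the summability in Corollary~\ref{cor:bregman-consequences}. The only differences are cosmetic: you use thresholds $\delta/3,2\delta/3$ where the paper uses $\sigma,3\sigma,4\sigma$, and you bound the bad weights by $e^{\eta M}$ where the paper uses $K_{\mathcal{X}}$.
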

\begin{proof}
Choose $\sigma>0$ so small that
\[
    g_{\mathcal{X},i}(\bar{\bz})\le a_{\bar{\bx}}-4\sigma, \quad \forall i\in I_{\bar{\bx}},
    \qquad
    g_{\mathcal{Y},j}(\bar{\bz})\le a_{\bar{\by}}-4\sigma, \quad \forall j\in I_{\bar{\by}},
\]
where the corresponding family of inequalities is understood as vacuous if the set is empty. By continuity and finiteness of the coordinate sets, we may shrink
$\mathcal{U}_{\bar{\bz}}$ so that, for every $\bw\in \mathcal{U}_{\bar{\bz}}$,
\begin{align}
    g_{\mathcal{X},i}(\bw)&\le a_{\bar{\bx}}-3\sigma, \qquad \forall i\in I_{\bar{\bx}}, \label{eq:bad_x_gap}\\
    g_{\mathcal{X},\ell}(\bw)&\ge a_{\bar{\bx}}-\sigma, \qquad \forall \ell\notin I_{\bar{\bx}},  \label{eq:good_x_gap}\\
    g_{\mathcal{Y},j}(\bw)&\le a_{\bar{\by}}-3\sigma, \qquad \forall j\in I_{\bar{\by}},          \label{eq:bad_y_gap}\\
    g_{\mathcal{Y},\ell}(\bw)&\ge a_{\bar{\by}}-\sigma, \qquad \forall \ell\notin I_{\bar{\by}}, \label{eq:good_y_gap}
\end{align}
where the lower bounds for the coordinates outside $I_{\bar{\bx}}$ and $I_{\bar{\by}}$ follow from the fact that $g_{\mathcal{X},\ell}(\bar{\bz})\geq a_{\bar{\bx}}$ for all $\ell\notin I_{\bar{\bx}}$ and $g_{\mathcal{Y},\ell}(\bar{\bz})\geq a_{\bar{\by}}$ for all $\ell\notin I_{\bar{\by}}$.

Let
\[
    K_{\mathcal{X}}:=\sup_{\bw\in \mathcal{Z},\ell} \ e^{-\eta g_{\mathcal{X},\ell}(\bw)}<\infty,
    \qquad
    K_{\mathcal{Y}}:=\sup_{\bw\in \mathcal{Z},\ell} \ e^{-\eta g_{\mathcal{Y},\ell}(\bw)}<\infty~.
\]
Since $r_{\bar{\bx}}(\bar{\bx})=r_{\bar{\by}}(\bar{\by})=0$, shrink $\mathcal{U}_{\bar{\bz}}$ further so that, for all $\bz=(\bx,\by)\in \mathcal{U}_{\bar{\bz}}$,
\begin{align}
    r_{\bar{\bx}}(\bx)K_{\mathcal{X}}
    &\le e^{-\eta(a_{\bar{\bx}}-2\sigma)}-e^{-\eta(a_{\bar{\bx}}-\sigma)},
    \label{eq:x_den_small}\\
    r_{\bar{\by}}(\by)K_{\mathcal{Y}}
    &\le e^{-\eta(a_{\bar{\by}}-2\sigma)}-e^{-\eta(a_{\bar{\by}}-\sigma)}~.
    \label{eq:y_den_small}
\end{align}
The right-hand sides are strictly positive.

Assume $\bz_t,\bz'_{t-1}\in \mathcal{U}_{\bar{\bz}}$. The update of $x'_{t,i}$ is
\[
    x'_{t,i}
    =\frac{x_{t,i}e^{-\eta g_{\mathcal{X},i}(\bz'_{t-1})}} {\sum_{\ell=1}^n x_{t,\ell}e^{-\eta g_{\mathcal{X},\ell}(\bz'_{t-1})}}~.
\]
If $i\in I_{\bar{\bx}}$, then~\eqref{eq:bad_x_gap} gives
\[
    e^{-\eta g_{\mathcal{X},i}(\bz'_{t-1})}
    \ge e^{-\eta(a_{\bar{\bx}}-3\sigma)}~.
\]
For $\ell\notin I_{\bar{\bx}}$,~\eqref{eq:good_x_gap} gives
\[
    e^{-\eta g_{\mathcal{X},\ell}(\bz'_{t-1})}
    \le e^{-\eta(a_{\bar{\bx}}-\sigma)}~.
\]
Therefore, using~\eqref{eq:x_den_small},
\[
\sum_{\ell=1}^n x_{t,\ell}e^{-\eta g_{\mathcal{X},\ell}(\bz'_{t-1})}
\le (1-r_{\bar{\bx}}(\bx_t))e^{-\eta(a_{\bar{\bx}}-\sigma)}+r_{\bar{\bx}}(\bx_t)K_{\mathcal{X}}
\le e^{-\eta(a_{\bar{\bx}}-\sigma)}+r_{\bar{\bx}}(\bx_t)K_{\mathcal{X}}
\le e^{-\eta(a_{\bar{\bx}}-2\sigma)}~.
\]
Hence, for every $i\in I_{\bar{\bx}}$,
\[
    \frac{x'_{t,i}}{x_{t,i}}
    \ge \frac{e^{-\eta(a_{\bar{\bx}}-3\sigma)}}{e^{-\eta(a_{\bar{\bx}}-2\sigma)}}
    =e^{\eta\sigma}~.
\]
The same argument in the $y$-block gives, for every $j\in I_{\bar{\by}}$,
\[
    \frac{y'_{t,j}}{y_{t,j}}\ge e^{\eta\sigma}~.
\]

Set $\rho:=e^{\eta\sigma}>1$.
The same denominator estimate, now applied to the update from $\bz_t$ to $\bz_{t+1}$, gives \eqref{eq:expansion} whenever $\bz_t,\bz'_t\in \mathcal{U}_{\bar{\bz}}$, since that update uses the gradient evaluated at $\bz'_t$.

Summing the inequalities for the primed iterates over the bad sets gives
\begin{equation}
    \label{eq:aggregate_expansion_primed}
    r_{\bar{\bx}}(\bx'_t)\ge \rho r_{\bar{\bx}}(\bx_t),
    \qquad
    r_{\bar{\by}}(\by'_t)\ge \rho r_{\bar{\by}}(\by_t)~.
\end{equation}

By the data processing inequality for KL divergence, grouping the simplex into the two sets ``bad'' and ``not bad'' cannot increase KL. Thus,
\[
    B_{\mathcal{X}}(\bx'_t;\bx_t)
    \ge d(r_{\bar{\bx}}(\bx'_t),r_{\bar{\bx}}(\bx_t)),
\]
where $d(q,p)=q\ln(q/p)+(1-q)\ln((1-q)/(1-p))$, and similarly in the $y$-block. Hence, Lemma~\ref{lemma:kl_lower} and~\eqref{eq:aggregate_expansion_primed} imply
\[
    B_{\mathcal{X}}(\bx'_t;\bx_t)\ge c_\rho r_{\bar{\bx}}(\bx'_t),
    \qquad
    B_{\mathcal{Y}}(\by'_t;\by_t)\ge c_\rho r_{\bar{\by}}(\by'_t)~.
\]
Adding the two bounds proves \eqref{eq:kl_bad_mass}. Finally, Corollary~\ref{cor:bregman-consequences} gives $\sum_{t=1}^\infty B(\bz'_t;\bz_t)<\infty$, and therefore~\eqref{eq:local_bad_mass_summable} follows from~\eqref{eq:kl_bad_mass}.
\end{proof}

\begin{lemma}[Inactive-coordinate signs for cluster points]
\label{lemma:inactive}
Let $\mathcal{Z}=\Delta_n\times\Delta_m$.
Run OMWU with $0<\eta<1/(3L)$.
Let $\bar{\bz}=(\bar{\bx},\bar{\by})$ be any cluster point of $(\bz_t)$, set
\[
    S_{\bar{\bx}} := \supp(\bar{\bx}), \qquad S_{\bar{\by}}:=\supp(\bar{\by}),
\]
and let $a_{\bar{\bx}},a_{\bar{\by}}$ be the active-face constants from Lemma~\ref{lemma:face-stationarity}. Then
\[
    \nabla_{x_i}f(\bar{\bx},\bar{\by})
    \ge a_{\bar{\bx}}, \quad \forall i\notin S_{\bar{\bx}},
    \qquad \text{ and }
    \qquad
    -\nabla_{y_j}f(\bar{\bx},\bar{\by})
    \ge a_{\bar{\by}}, \quad \forall j\notin S_{\bar{\by}}~.
\]
\end{lemma}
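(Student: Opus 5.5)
The plan is to argue by contradiction: assume that $I_{\bar{\bx}}\cup I_{\bar{\by}}\neq\varnothing$, with the sets as in Lemma~\ref{lemma:residual}. By Lemma~\ref{lemma:face-stationarity}, $\bar{\bz}$ satisfies the hypotheses of Lemma~\ref{lemma:residual}. I will show that the trajectory eventually stays forever in a small neighborhood of $\bar{\bz}$. Inside that neighborhood, \eqref{eq:expansion} forces every bad coordinate to grow geometrically, which is impossible on the simplex.

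\textbf{Setup.} Fix a ball $\mathcal{U}:=\{\bz\in\mathcal{Z}:\|\bz-\bar{\bz}\|<\varepsilon\}$ contained in both neighborhoods given by Lemma~\ref{lemma:residual} and Lemma~\ref{lemma:local_bad_mass_summable}. This fixes the constants $C$, $\rho$ and $c_\rho$ once and for all, before any iterate is chosen. As a Lyapunov function, use the comparator form, Lemma~\ref{lemma:comparator-step}, with $\bu=\bar{\bz}$. Dropping the nonnegative terms $(1-2\eta L-\eta L)B(\bz_{t+1};\bz'_t)$ and $(1-\eta L)B(\bz'_t;\bz_t)$, and writing $V_t:=B(\bar{\bz};\bz_t)+\eta L\,B(\bz_t;\bz'_{t-1})$, we get
\[
V_{t+1}\le V_t-\eta\langle G(\bz'_t),\bz'_t-\bar{\bz}\rangle .
\]
Whenever $\bz_t,\bz'_{t-1},\bz'_t\in\mathcal{U}$, Lemma~\ref{lemma:residual} and then \eqref{eq:kl_bad_mass} give
\[
V_{t+1}\le V_t+\eta C\,r(\bz'_t)\le V_t+\frac{\eta C}{c_\rho}B(\bz'_t;\bz_t).
\]

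\textbf{Choosing the starting time.} Take $t_k$ with $\bz_{t_k}\to\bar{\bz}$. The key observation is that $B(\bar{\bz};\bz_{t_k})\to0$, even though $h$ is not differentiable at $\bar{\bz}$. Indeed, $B(\bar{\bz};\bz)$ only involves the coordinates in $S_{\bar{\bx}}$ and $S_{\bar{\by}}$, where the iterates along the subsequence converge to strictly positive limits. By Corollary~\ref{cor:bregman-consequences}, $B(\bz_{t_k};\bz'_{t_k-1})\to0$, $\sum_t B(\bz'_t;\bz_t)<\infty$, and $\bz'_t-\bz_t\to\bzero$. Hence for any $\delta>0$ we can pick $N=t_k$ large enough that three things hold: $V_N<\delta$; $\frac{\eta C}{c_\rho}\sum_{t\ge N}B(\bz'_t;\bz_t)<\delta$; and $\|\bz'_t-\bz_t\|<\varepsilon/2$ and $\|\bz_{t+1}-\bz'_t\|<\varepsilon/2$ for all $t\ge N-1$. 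Choose $\delta$ so that $\sqrt{2\cdot 2\delta}<\varepsilon/2$.

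\textbf{Bootstrapping.} I would prove by induction on $T\ge N$ that $\bz_t,\bz'_t\in\mathcal{U}$ for all $N\le t\le T$, and in fact $\|\bz_t-\bar{\bz}\|<\varepsilon/2$. The inductive step runs as follows.
\begin{itemize}
\item Summing the local inequality up to $T$ gives $V_{T+1}<2\delta$.
\item Pinsker's inequality then gives $\|\bz_{T+1}-\bar{\bz}\|\le\sqrt{2V_{T+1}}<\varepsilon/2$.
\item Hence $\bz_{T+1}$ and $\bz'_{T+1}$ both lie in $\mathcal{U}$, since $\|\bz'_{T+1}-\bz_{T+1}\|<\varepsilon/2$.
\end{itemize}
For the base case, $\bz'_{N-1}$ is within $\varepsilon/2$ of $\bz_N$, which in turn is within $\varepsilon/2$ of $\bar{\bz}$, so $\bz'_{N-1}\in\mathcal{U}$ as needed.

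This invariance argument is the main obstacle. One has to order the quantifiers correctly: fix $\mathcal{U}$ and its constants first, then choose $N$. One also has to check that the local inequality is only ever invoked when all three points $\bz_t,\bz'_{t-1},\bz'_t$ are already known to lie in $\mathcal{U}$.

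\textbf{Conclusion.} Once $\bz_t,\bz'_t\in\mathcal{U}$ for all $t\ge N$, \eqref{eq:expansion} gives $x_{t,i}\ge\rho^{\,t-N}x_{N,i}$ for $i\in I_{\bar{\bx}}$, and similarly in the $\by$-block. Since $x_{N,i}>0$ (the iterates stay in $\ri\mathcal{Z}$) and $\rho>1$, this exceeds $1$ for large $t$, a contradiction. Therefore $I_{\bar{\bx}}=I_{\bar{\by}}=\varnothing$, which is exactly the claimed inequalities for all $i\notin S_{\bar{\bx}}$ and $j\notin S_{\bar{\by}}$.
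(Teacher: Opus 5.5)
Your proposal is correct and takes essentially the same route as the paper. You argue by contradiction, use the comparator inequality with $\bu=\bar{\bz}$ together with the local residual bound and summability to trap the trajectory near $\bar{\bz}$, and then contradict the geometric growth of the bad coordinates. The differences are cosmetic: you fold $\eta L\,B(\bz_t;\bz'_{t-1})$ into the Lyapunov function via $1-2\eta L\ge\eta L$, you control the tail by $\sum_t B(\bz'_t;\bz_t)$ through \eqref{eq:kl_bad_mass} rather than by the local sum of $r(\bz'_t)$, and you contradict $x_{t,i}\le 1$ rather than $r<1/2$.
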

\begin{proof}
Suppose, toward a contradiction, that at least one inactive sign fails. Define
\[
    I_{\bar{\bx}}:=\{i\notin S_{\bar{\bx}}:g_{\mathcal{X},i}(\bar{\bz})<a_{\bar{\bx}}\},
    \qquad
    I_{\bar{\by}}:=\{j\notin S_{\bar{\by}}:g_{\mathcal{Y},j}(\bar{\bz})<a_{\bar{\by}}\}~.
\]
Then, $I_{\bar{\bx}} \cup I_{\bar{\by}}\ne\varnothing$.
Intersecting the neighborhoods given by Lemmas~\ref{lemma:residual} and \ref{lemma:local_bad_mass_summable}, and shrinking if necessary, give a neighborhood $\mathcal{U}_{\bar{\bz}}$ of $\bar{\bz}$, constants $C>0$, and $c_\rho>0$, such that
\begin{equation}
    \label{eq:local_residual_final}
    \langle G(\bz),\bz-\bar{\bz}\rangle
    \ge -Cr(\bz), \qquad \forall \bz\in \mathcal{U}_{\bar{\bz}},
\end{equation}
and
\begin{equation}
    \label{eq:local_tail_final}
    \sum_{\substack{t\ge 1:\,\bz_t\in \mathcal{U}_{\bar{\bz}},\,\bz'_{t-1}\in \mathcal{U}_{\bar{\bz}}}} r(\bz'_t)<\infty~.
\end{equation}
We also shrink $\mathcal{U}_{\bar{\bz}}$, if necessary, so that
\begin{equation}
    \label{eq:bad_mass_small_in_U}
    r_{\bar{\bx}}(\bx)<\frac12,
    \qquad
    r_{\bar{\by}}(\by)<\frac12,
    \qquad \forall \bz=(\bx,\by)\in \mathcal{U}_{\bar{\bz}}~.
\end{equation}
This is possible because $r_{\bar{\bx}}(\bar{\bx})=r_{\bar{\by}}(\bar{\by})=0$.

We now prove that a sufficiently late visit near $\bar{\bz}$ traps the whole future trajectory near $\bar{\bz}$. Apply Lemma~\ref{lemma:comparator-step} with the fixed comparator $\bu=\bar{\bz}$. Since $0<\eta<1/(3L)$, the Bregman coefficients in Lemma~\ref{lemma:comparator-step} are nonnegative, and dropping them gives
\[
    B(\bar{\bz};\bz_{t+1})
    \le B(\bar{\bz};\bz_t)-\eta\langle G(\bz'_t),\bz'_t-\bar{\bz}\rangle +\eta L\, B(\bz_t;\bz'_{t-1})~.
\]
Whenever $\bz'_t\in \mathcal{U}_{\bar{\bz}}$, \eqref{eq:local_residual_final} gives
\begin{equation}
    \label{eq:trap_step}
    B(\bar{\bz};\bz_{t+1})
    \le B(\bar{\bz};\bz_t) +\eta C\, r(\bz'_t) +\eta L\, B(\bz_t;\bz'_{t-1})~.
\end{equation}

Choose $R>0$ so that the closed product-norm ball $\overline{\mathcal B_R(\bar{\bz})}\cap \mathcal{Z}$ is contained in $\mathcal{U}_{\bar{\bz}}$, and set
\[
    \mathcal{W}:=\mathcal B_{R/2}(\bar{\bz})\cap \mathcal{Z}~.
\]
Corollary~\ref{cor:bregman-consequences} gives $\bz'_t-\bz_t\to 0$ and $\bz_t-\bz'_{t-1}\to 0$.
Hence, there is $T$ such that, for all $t\ge T$,
\begin{equation}
    \label{eq:adjacent_small}
    \|\bz'_t-\bz_t\|<\frac R4,
    \qquad
    \|\bz_t-\bz'_{t-1}\|<\frac R4~.
\end{equation}
Thus, if $t\ge T$ and $\bz_t\in \mathcal{W}$, then $\bz'_t\in \mathcal{U}_{\bar{\bz}}$ and $\bz'_{t-1}\in \mathcal{U}_{\bar{\bz}}$.

Since $\bar{\bz}$ is a cluster point, choose $t_k\to\infty$ such that $\bz_{t_k}\to\bar{\bz}$. Along this subsequence, $B(\bar{\bz};\bz_{t_k})\to 0$, because the KL divergence $B(\bar{\bz};\bz)$ only contains coordinates in $\supp(\bar{\bx})$ and $\supp(\bar{\by})$, and these coordinates converge to their positive limits along the subsequence. By~\eqref{eq:local_tail_final},
\[
    \sum_{\substack{t\ge N:\,\bz_t\in \mathcal{U}_{\bar{\bz}},\,\bz'_{t-1}\in \mathcal{U}_{\bar{\bz}}}} r(\bz'_t)\to 0
    \quad \text{ when } \quad N\to\infty,
\]
and by Corollary~\ref{cor:bregman-consequences},
\[
    \sum_{t=N}^\infty B(\bz_t;\bz'_{t-1})\to 0, \quad \text{ when } \quad N\to\infty~.
\]
Choose $N=t_k\ge T$ so large that
\begin{equation}
    \label{eq:small_tail_choice}
    B(\bar{\bz};\bz_N)
    +\eta C\sum_{\substack{t\ge N:\,\bz_t\in \mathcal{U}_{\bar{\bz}},\,\bz'_{t-1}\in \mathcal{U}_{\bar{\bz}}}} r(\bz'_t)
    +\eta L\sum_{t=N}^\infty B(\bz_t;\bz'_{t-1})
    <\frac{R^2}{8}~.
\end{equation}
By Pinsker's inequality,
\[
    \frac12\|\bz_N-\bar{\bz}\|^2\le B(\bar{\bz};\bz_N)
    <\frac{R^2}{8},
\]
so $\bz_N\in \mathcal{W}$.

We claim that $\bz_t\in \mathcal{W}$ for every $t\ge N$. If not, let $M>N$ be the first index such that $\bz_M\notin \mathcal{W}$. Then $\bz_t\in \mathcal{W}$ for $t=N,\dots,M-1$. By~\eqref{eq:adjacent_small}, for these times $\bz'_t\in \mathcal{U}_{\bar{\bz}}$ and $\bz'_{t-1}\in \mathcal{U}_{\bar{\bz}}$. Hence \eqref{eq:trap_step} applies, and each such time is included in the local sum in \eqref{eq:small_tail_choice}.
Summing \eqref{eq:trap_step} from $t=N$ to $M-1$ gives
\begin{align*}
    B(\bar{\bz};\bz_M)
    \le B(\bar{\bz};\bz_N)
    +\eta C\sum_{t=N}^{M-1}r(\bz'_t)
    +\eta L\sum_{t=N}^{M-1}B(\bz_t;\bz'_{t-1})
    <\frac{R^2}{8}~.
\end{align*}
Another application of Pinsker gives
\[
    \|\bz_M-\bar{\bz}\|<\frac R2,
\]
so $\bz_M\in \mathcal{W}$, contradicting the definition of $M$. Therefore,
\begin{equation}
    \label{eq:permanent_trap}
    \bz_t\in \mathcal{W}, \quad \forall t\ge N~.
\end{equation}
By \eqref{eq:adjacent_small}, after increasing $N$ if necessary,
\begin{equation}
    \label{eq:primed_in_U_forever}
    \bz'_t\in \mathcal{U}_{\bar{\bz}}, \quad \forall t\ge N~.
\end{equation}

It remains to contradict the local multiplicative expansion. For all $t \geq N $, by~\eqref{eq:permanent_trap} and~\eqref{eq:primed_in_U_forever}, Lemma~\ref{lemma:local_bad_mass_summable} also gives
\begin{equation}
    \label{eq:unprimed_expansion}
    x_{t+1,i}\ge \rho x_{t,i}, \quad \forall i\in I_{\bar{\bx}},
    \qquad
    y_{t+1,j}\ge \rho y_{t,j}, \quad \forall j\in I_{\bar{\by}},
\end{equation}
where $\rho>1$.

If $I_{\bar{\bx}} \ne\varnothing$, then the full-support initialization implies
\[
    r_{\bar{\bx}}(\bx_{N})=\sum_{i\in I_{\bar{\bx}}} x_{N,i}>0~.
\]
Summing \eqref{eq:unprimed_expansion} over $i\in I_{\bar{\bx}}$ gives
\[
    r_{\bar{\bx}}(\bx_{t+1}) \ge \rho r_{\bar{\bx}}(\bx_{t}), \quad \forall t\ge N,
\]
and hence
\[
    r_{\bar{\bx}}(\bx_{t}) \ge \rho^{t-N} r_{\bar{\bx}}(\bx_{N}), \quad \forall t\ge N~.
\]
Since $\rho>1$, the right-hand side eventually exceeds $1/2$, contradicting~\eqref{eq:bad_mass_small_in_U} and~\eqref{eq:permanent_trap}. Hence
$I_{\bar{\bx}}=\varnothing$.

The same argument applies to $I_{\bar{\by}}$, therefore $I_{\bar{\by}}=\varnothing$.
\end{proof}

The above lemma proves that no strictly bad inactive coordinate exists, which is exactly the desired pair of inactive-coordinate inequalities.

We can now prove the main result.
\begin{proof}[Proof of Theorem~\ref{thm:main}]
The product simplex $\mathcal{Z}$ is compact, so $(\bz_t)$ has a cluster point $\bar{\bz}$. Lemma~\ref{lemma:inactive} proves that $\bar{\bz}$ satisfies the inactive-coordinate sign conditions of Lemma~\ref{lemma:inactive-signs-to-vi}. Hence $\bar{\bz}\in \mathcal{Z}^\star$.

Choose $t_k\to\infty$ such that $\bz_{t_k}\to\bar{\bz}$. Since $\bar{\bz}\in \mathcal{Z}^\star$, Corollary~\ref{cor:bregman-consequences} implies that the scalar sequence $B(\bar{\bz};\bz_t)$ converges. Along the subsequence,
\[
    B(\bar{\bz};\bz_{t_k})
    =\sum_{i:\bar x_i>0}\bar x_i\ln\frac{\bar x_i}{x_{t_k,i}}
    +\sum_{j:\bar y_j>0}\bar y_j\ln\frac{\bar y_j}{y_{t_k,j}}
    \to 0,
\]
because the coordinates in $\supp(\bar{\bx})$ and $\supp(\bar{\by})$ converge to
positive limits, while the coordinates outside these supports have zero
coefficient in the KL divergence. Therefore the whole convergent scalar
sequence satisfies $B(\bar{\bz};\bz_t)\to 0$.
Pinsker's inequality gives
\[
    \frac12\|\bz_t-\bar{\bz}\|^2\le B(\bar{\bz};\bz_t)\to 0,
\]
so $\bz_t\to\bar{\bz}$. Finally, Corollary~\ref{cor:bregman-consequences} gives $\bz'_t-\bz_t\to0$, and hence $\bz'_t\to\bar{\bz}$.
\end{proof}

\section*{Acknowledgements}
I thank Aryan Mokhtari for telling me about Popov's algorithm in 2022.

\bibliographystyle{plainnat_nourl}
\bibliography{../../learning}

\appendix

\section{Equivalence between One- and Two-sequence OMWU}
\label{sec:equivalent}

Define
\[
T(\bq,\ba):=\argmin_{\bz \in \mathcal{Z}} \ \eta \langle \ba,\bz\rangle + B(\bz;\bq)~.
\]
Then, the OMDA scheme is
\begin{align*}
\bg_t&=G(\bz'_t),\\
\bz_{t+1} &= T(\bz_t,\bg_t),\\
\bz'_{t+1} &= T(\bz_{t+1},\bg_t)~.
\end{align*}

Note that for the entropic case, we have $T(T(\bq,\ba),\bb)=T(\bq,\ba+\bb)$.
Now, let $\bq_0,\bq_1\in\ri \mathcal{Z}$ be the initial one-sequence OMWU points, and set $\bg_0=G(\bq_0)$. Define an auxiliary two-sequence initialization by
\[
    \bz'_0:=\bq_0,
    \qquad
    \bz_0:=T(\bq_1,-2\bg_0)~.
\]
Since $\bq_1\in\ri \mathcal{Z}$, also $\bz_0\in\ri\mathcal{Z}$.
Runnig the two-sequence OMDA scheme, we have
\[
    \bz_1=T(\bz_0,\bg_0)=T(\bq_1,-\bg_0),
\]
and hence
\[
    \bz'_1=T(\bz_1,\bg_0)=\bq_1~.
\]
We prove by induction that
\[
    \bz'_t=\bq_t,\quad \forall t\ge0~.
\]
The claim is true for $t=0,1$. Suppose it is true up to time $t\ge1$.
Then, $\bg_t=G(\bz'_t)=G(\bq_t)$, and
\[
    \bz_t=T(\bq_t,-\bg_{t-1})~.
\]
Therefore,
\begin{align*}
    \bz'_{t+1}
    &=T(\bz_{t+1},\bg_t)
    =T(T(\bz_t,\bg_t),\bg_t)
    =T(\bz_t,2\bg_t)
    =T(T(\bq_t,-\bg_{t-1}),2\bg_t)
    =T(\bq_t,2\bg_t-\bg_{t-1})
    =\bq_{t+1}~.
\end{align*}
That is, $\bq_{t+1}=T(\bq_t,2G(\bq_t)-G(\bq_{t-1}))$, the one-sequence OMWU.
Thus, the one-sequence OMWU trajectory is exactly the primed trajectory of a valid two-sequence OMDA run. By Theorem 2, the two-sequence run satisfies $\bz'_t\to \bar{\bz}\in Z^\star$. Since $\bq_t=\bz'_t$, we conclude $\bq_t\to \bar{\bz}\in Z^\star$.

Note that different papers initialize OMWU differently, for example by specifying $\bq_{-1},\bq_0$, or by taking $G(\bq_{-1})=\boldsymbol{0}$, or by doing one ordinary multiplicative-weights step first. This does not matter. Once the first two full-support iterates $\bq_{0},\bq_1$ are fixed, one can analyze the convergence from t=1 onward, and changing finitely many initial steps does not affect convergence.

\section{LLM-Assisted Proof Discovery}
\label{sec:llm}

I have tried to solve this problem on and off from 2022. I tried many different approaches and all of them hit the same wall: establishing the optimality of the non-active coordinates of the cluster point. Hence, recently I decided to use ChatGPT.

The ``Thinking'' version, among many different failed attempts, suggested the same approach in Section~\ref{sec:missing_piece}, but it failed to formalized it and it gave up. Surprisingly, the Pro version also \emph{failed}, no matter what prompt I used and which strategy I suggested. The Pro version did suggest the same approach of Section~\ref{sec:missing_piece}, but again it failed to formalize it.

So, I used a different approach: I wrote a PDF detailing everything I knew about this problem, and fed it to ChatGPT. The prompts were simple enough:\footnote{These prompts were inspired by the ones in this tweet by Sebastien Bubeck: \url{https://x.com/SebastienBubeck/status/2062335588720889866}.}
\begin{itemize}
\item Study the following pdf (roughly, just get the general idea).
\item Prove the remaining part to finish the proof. Any step you make must be properly justified, do not give vague statements.
\item Great. Now take another pass through your writeup and check that things work. If something is a bit fuzzy, explain it a bit more.
\end{itemize}
In this way, ChatGPT Pro was able to produce a correct proof. The critical ingredient seems to be Lemma~\ref{lemma:comparator-step}, that was in the PDF I wrote. This is an obvious result, and I checked that the Thinking version can easily derive it with a minimal-information prompt (i.e., ``In the entropic proof, what happens if you choose z* not to be a saddle-point?''). However, it is not typically used because there is the last term is not easily controllable. I added it because I was exploring different, unbeaten paths. Indeed, this lemma seems to be necessary to prove the trapping argument, that is the pillar of the proof.

I speculate that the above success/failures might suggest different prompting strategies or harnesses for theorem proving. For example, one might first encourages the model to produce a number of \emph{possibly} useful results, and, later, trying to solve the initial task.

\end{document}